\def\marker{\>\hbox{${\vcenter{\vbox{
    \hrule height 0.4pt\hbox{\vrule width 0.4pt height 6pt
    \kern6pt\vrule width 0.4pt}\hrule height 0.4pt}}}$}\>}
\newtheorem{theorem}{Theorem}[section]
\newtheorem{conjecture}[theorem]{Conjecture}
\newtheorem{problem}[theorem]{Problem}
\newtheorem{lemma}[theorem]{Lemma}
\newtheorem{corollary}[theorem]{Corollary}
\theoremstyle{definition}
\newtheorem{definition}[theorem]{Definition}
\newtheorem{example}[theorem]{Example}
\newtheorem{remark}[theorem]{Remark}
\def\nul{\varnothing} 
\def\VEC#1#2#3{#1_{#2},\ldots,#1_{#3}}
\def\SE#1#2#3{\sum_{#1=#2}^{#3}}
\def\CH#1#2{\binom{#1}{#2}} 
\def\FL#1{\left\lfloor{#1}\right\rfloor} 
\def\CL#1{\left\lceil{#1}\right\rceil}   
\def\NN{{\mathbb N}}
\def\C#1{\left | #1 \right |}    
\def\cD{{\mathcal D}}
\def\cF{{\mathcal F}}
\def\bU{{\overline U}}
\def\esub{\subseteq}
\def\eps{\varepsilon}
\long\def\skipit#1{}
\begin{document}

\title{Acyclic graphs with at least $2\ell+1$ vertices
are $\ell$-recognizable}

\author{
Alexandr V. Kostochka\thanks{University of Illinois,
Urbana, IL:
\texttt{kostochk@math.uiuc.edu}.  Supported by NSF
grants DMS-1600592 and DMS-2153507.}\,,
Mina Nahvi\thanks{University of Illinois,
Urbana, IL: \texttt{mnahvi2@illinois.edu}.}\,,
Douglas B. West\thanks{Zhejiang Normal Univ., Jinhua, China
and University of Illinois, Urbana, IL:
\texttt{dwest@illinois.edu}.  Supported by National Natural Science Foundation
of China grants NSFC 11871439, 11971439, and U20A2068.}\,,
Dara Zirlin\thanks{University of Illinois at Urbana--Champaign, Urbana IL 61801:
\texttt{zirli22d@mtholyoke.edu}.  Supported by Arnold O. Beckman Campus
Research Board Award RB20003 of the University of Illinois 
and by NSF RTG grant DMS-1937241.}
}

\date{\today}
\maketitle

\baselineskip 16pt

\begin{abstract}
The {\it $(n-\ell)$-deck} of an $n$-vertex graph is the multiset of subgraphs
obtained from it by deleting $\ell$ vertices.  A family of $n$-vertex graphs is
{\it $\ell$-recognizable} if every graph having the same $(n-\ell)$-deck as a
graph in the family is also in the family.  We prove that the family of 
$n$-vertex graphs with no cycles is $\ell$-recognizable when $n\ge2\ell+1$
(except for $(n,\ell)=(5,2)$).  As a consequence, the family of $n$-vertex
trees is $\ell$-recognizable when $n\ge2\ell+1$ and $\ell\ne2$.  It is known
that this fails when $n=2\ell$.
\end{abstract}

\section{Introduction}

The {\it $j$-deck} of a graph is the multiset of its $j$-vertex induced
subgraphs.  We write this as the $(n-\ell)$-deck when the graph has $n$
vertices and the focus is on deleting $\ell$ vertices.  An $n$-vertex graph is
{\it $\ell$-reconstructible} if it is determined by its $(n-\ell)$-deck.
Since every member of the $(j-1)$-deck arises $n-j+1$ times by deleting a 
vertex from a member of the $j$-deck, the $j$-deck of a graph determines its
$(j-1)$-deck.  Therefore, a natural reconstruction problem is to find for each
graph the maximum $\ell$ such that it is $\ell$-reconstructible.  For this
problem, Manvel~\cite{M69,M74} extended the classical Reconstruction
Conjecture of Kelly~\cite{Kel1} and Ulam~\cite{U}.

\begin{conjecture}[{\rm Manvel~\cite{M69,M74}}]
For $\ell\in\NN$, there exists a threshold $M_\ell$ such that every graph with
at least $M_\ell$ vertices is $\ell$-reconstructible.
\end{conjecture}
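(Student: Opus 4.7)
The conjecture is of course notoriously hard: its $\ell=1$ instance is the classical Reconstruction Conjecture of Kelly and Ulam, open for over eighty years. So rather than promising a full proof, I would lay out the natural three-pronged attack and indicate where each prong stops.

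The first prong is a Kelly-style counting lemma: for any graph $H$ on at most $n-\ell$ vertices, the number of induced copies of $H$ in any graph $G$ with the given $(n-\ell)$-deck is a fixed linear combination of induced-$H$ counts across the cards, with coefficients depending only on $n$, $\ell$, and $|V(H)|$. M\"obius inversion over the subgraph-containment lattice then yields non-induced subgraph counts as well, so the deck determines the degree sequence, the edge count, and the count of every small fixed subgraph, and hence a large number of structural invariants. The second prong would be induction on $\ell$: supposing $M_{\ell-1}$ exists, one hopes to show that once $n$ is large enough relative to $\ell$, the $(n-\ell)$-deck determines (or at least narrows to finitely many candidates) the $(n-\ell+1)$-deck of $G$, at which point the inductive hypothesis closes the argument. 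The obstruction here is that passing from a smaller deck to a larger one is not generally possible; the hope is that for $n$ large the redundancy between cards is so high that only one candidate ``up-deck'' is actually realizable by some $n$-vertex graph.

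The third and most realistic prong is the class-by-class approach taken in this paper: establish $\ell$-recognizability for natural families (forests here, but also graphs of bounded degree, highly connected graphs, bipartite graphs, and so on), combining the counting lemma with structural arguments tailored to each class, and then piece together the coverage. The fundamental obstacle overall is the Reconstruction Conjecture itself, which is exactly the $\ell=1$ case of Manvel's conjecture, so no full proof is within reach of current techniques; the realistic state of the art---exemplified by the present paper---consists of incremental progress on well-chosen subfamilies and of showing that any hypothetical counterexamples must have very restricted structure.
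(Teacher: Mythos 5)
The statement you were asked about is Manvel's Conjecture, which is an open problem: the paper states it only as a \emph{conjecture} and offers no proof of it, using it purely as motivation for the partial result (that acyclicity, and hence being a tree, is $\ell$-recognizable for $n\ge 2\ell+1$). You correctly recognized this, declined to claim a proof, and instead sketched the landscape of known approaches. That is the right call: even the $\ell=1$ case is the classical Kelly--Ulam Reconstruction Conjecture, which remains open, so no complete argument exists to compare against.

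Your survey is essentially accurate. The first prong (Kelly-style counting, which determines induced-subgraph counts for all graphs on fewer than $n-\ell$ vertices from the deck) is exactly the engine behind Lemma 2.5 of this paper and its corollaries on degree lists and component structure. The third prong---class-by-class recognizability and weak reconstructibility, assembled via the recognition/reconstruction split of Bondy and Hemminger---is precisely the strategy the present paper and its cited companions pursue. Your second prong (inducting on $\ell$ by recovering the $(n-\ell+1)$-deck from the $(n-\ell)$-deck) is the one place where you should be more careful: as you note, the smaller deck determines strictly less information than the larger one, and no general mechanism for reversing this is known; so this prong is speculative rather than a workable reduction. Since there is nothing in the paper to measure a proof against, the only substantive evaluation is that your assessment of the problem's status is correct and your account of the available techniques matches what the paper actually uses for its partial results.
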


\noindent
Manvel named this ``Kelly's Conjecture'' in honor of the final sentence in
Kelly~\cite{Kel2}, which suggested that one can study reconstruction from
the $(n-2)$-deck.  Manvel noted that Kelly may have expected the statement to
be false.

Many reconstruction arguments have two parts.  First, one proves that the deck
determines that the graph is in a particular class or has a particular property.
When the $(n-\ell)$-deck determines this, the property is
{\it $\ell$-recognizable}.  Separately, using the knowledge that the deck
determines whether the graph has that property, one proves that only one graph
with that property has that deck.  This makes the family {\it weakly
$\ell$-reconstructible}, meaning that no two graphs in the family have the same
deck.  Bondy and Hemminger~\cite{BH} articulated the distinction between
these two steps for the case $\ell=1$.

Here, toward $\ell$-reconstructibility of trees, we consider
$\ell$-recognizability of acyclic graphs.  We prove the following theorem.

\begin{theorem}\label{mainthm}
For $n\ge2\ell+1$, except when $(n,\ell)=(5,2)$, the family of $n$-vertex
acyclic graphs is $\ell$-recognizable.
\end{theorem}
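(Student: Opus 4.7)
The main tool is Kelly-style subgraph counting: for every graph $F$ with $|V(F)| \le n-\ell$, the $(n-\ell)$-deck determines the number of (not-necessarily-induced) copies of $F$ in $G$. Applied to $F=K_2$ and to $F=C_k$ for $3\le k\le n-\ell$, this gives $|E(G)|$ and the number of cycles in $G$ of each length up to $n-\ell$. So if $G$ is acyclic and $H$ shares its $(n-\ell)$-deck, then $|E(H)|=|E(G)|\le n-1$ and $H$ has no cycle of length at most $n-\ell$. What remains is to rule out $H$ having a cycle of length $g\in\{n-\ell+1,\ldots,n\}$.

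Suppose such an $H$ exists and $C$ is a shortest cycle in $H$, of length $g$. Since $C$ alone contributes $g$ edges out of at most $n-1$, at most $n-1-g\le \ell-2$ edges of $H$ lie off $C$ and $|V(H)\setminus V(C)|\le \ell-1$; thus $H$ must be a long cycle with only a tiny gadget attached, of which only finitely many isomorphism types exist for each $(n,\ell)$. For $\ell=1$ we are done immediately, since $g=n$ would force $|E(H)|\ge n>n-1$. For $\ell=2$ the only remaining option is $g=n-1$ with $H=C_{n-1}+K_1$ and $G$ a spanning tree, and one must rule this out by a direct deck comparison when $n\ge 6$ (for instance, via the number of cards containing an isolated vertex, or via the count of cards isomorphic to $P_{n-2}$). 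The $(n,\ell)=(5,2)$ exception in the statement reflects the coincidence that $C_4+K_1$ has the same $3$-deck as the unique $5$-vertex tree of degree sequence $(3,2,1,1,1)$, so the hypothesis is sharp there.

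For general $\ell\ge 3$ the strategy is the same but combinatorially larger: enumerate the possibilities for $g$ and for the off-$C$ gadget (at most $\ell-1$ extra vertices and $\ell-2$ extra edges, together with their connecting edges to $C$), and for each candidate $H$ exhibit a reconstructible deck invariant that no forest $G$ of matching edge count can attain. Useful invariants beyond $|E|$ and short-cycle counts include the degree-sequence moments $\sum_v\binom{d_v}{k}$ reconstructible from the $j$-decks with $j\le n-\ell$ (the long cycle in $H$ forces many vertices of degree at least $2$, whereas any forest with the given edge count has many leaves) and the multiplicities in the deck of specific small subtrees or small disjoint unions of paths. I expect this structural case analysis to be the main obstacle; the hypothesis $n\ge 2\ell+1$ is precisely what makes the $(n-\ell)$-deck rich enough to see a distinguishing substructure in every remaining case, and one must be careful throughout to respect the single exception $(5,2)$.
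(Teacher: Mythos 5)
Your reduction is sound as far as it goes: counting copies of $C_k$ for $k\le n-\ell$ shows that any nonacyclic reconstruction $H$ has girth at least $n-\ell+1$, and the edge count then forces $H$ to be one long cycle with a small attachment; this matches the paper's starting point (Remark~\ref{fullpath} and part of Lemma~\ref{C+Q}). But everything after that is a plan rather than a proof, and the plan has a concrete flaw. The decisive step --- exhibiting, for every candidate $H$ and every forest $F$ sharing its deck, a deck invariant that separates them --- is where all the work lies, and ``enumerate the gadgets'' is not a finite case analysis: the attachment has up to $\ell-1$ vertices, the candidate forests $F$ are essentially unconstrained at this stage, and the number of cases grows with $\ell$, so a uniform argument is required. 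Worse, your first proposed invariant, the degree-sequence moments $\sum_v\binom{d_v}{k}$, provably cannot work: the paper shows (Corollary~\ref{degrees}) that for $n\ge2\ell+1$ with $(n,\ell)\ne(5,2)$ the entire degree list is determined by an acyclic $(n-\ell)$-deck, so $F$ and $H$ always have identical degree sequences. The intuition that ``the long cycle forces many degree-$2$ vertices whereas a forest has many leaves'' fails because the matching forest can be (and in the tight case is forced to be) a tree that is nearly a path.

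What the paper does to close this gap is substantial and is precisely the content your proposal is missing. It defines a parameter $k$ from the deck (the largest $j$ such that all $j$-vines and $j$-evines have fewer than $n-\ell$ vertices), proves that every reconstruction has girth at least $2k+4$, and uses a Greenwell--Hemminger-style counting lemma to show that all reconstructions have the same number of $k$-centers and $(k+1)$-centers. A marking argument (Lemma~\ref{marking}) bounds the number of such centers in a forest by $1+d_C+\ell$, while the long cycle in $H$ forces at least $q+d_C-2$ of them; comparing these bounds forces $n=2\ell+1$, girth exactly $\ell+2$, and $F$ to be a tree of very restricted shape ($\ell$-spiderly), as in Theorem~\ref{2l+1}. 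Only after this structural restriction does the path-count invariant you gesture at for $\ell=2$ become usable in general: an $\ell$-spiderly tree has at most $\ell+3$ cards that are paths (Lemma~\ref{spiderly}), while $H$ has at least $\ell+2+2t$. Without some analogue of the center-counting and marking machinery to first pin down $F$, your case analysis has no mechanism to terminate, so the proposal as written does not constitute a proof.
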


We forbid $(n,\ell)=(5,2)$ due to the two graphs in Figure~\ref{52ex}, which
have the same $3$-deck.  Indeed, this possibility must be excluded from many of
the claims we prove.

\begin{figure}[h]
\begin{center}
\includegraphics[scale=0.5]{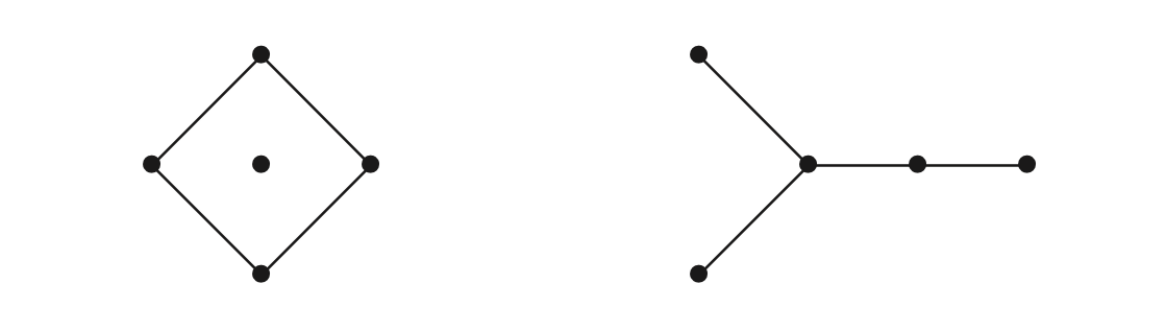}
\caption{$5$-vertex graphs with the same $3$-deck}\label{52ex}
\end{center}
\end{figure}

\vspace{-1pc}

Since the $(n-\ell)$-deck determines the $2$-deck when $n-\ell\ge2$, in this
setting we also know the number of edges.  This yields the following corollary.

\begin{corollary}
For $n\ge2\ell+1$, except when $(n,\ell)=(5,2)$, the family of $n$-vertex trees
is $\ell$-recognizable.
\end{corollary}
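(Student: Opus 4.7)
The plan is to reduce the corollary directly to Theorem~\ref{mainthm} together with the edge-counting observation in the paragraph preceding the corollary. I would fix an $n$-vertex tree $G$ and let $H$ be any $n$-vertex graph with the same $(n-\ell)$-deck; the goal is to show that $H$ is also a tree.

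First, since $G$ is acyclic, Theorem~\ref{mainthm} yields that $H$ is acyclic (the excluded case $(n,\ell)=(5,2)$ is also excluded in the corollary). This gives the harder half of being a tree. Next, I would use the fact that when $n-\ell\ge 2$, each card already contains its own $2$-deck, so summing with the appropriate multiplicity over the $(n-\ell)$-deck recovers the $2$-deck of the whole graph, which determines the number of edges. Under $n\ge 2\ell+1$, the inequality $n-\ell\ge 2$ is immediate for $\ell\ge 1$, and the case $\ell=0$ is trivial since then the ``deck'' is the graph itself. Hence $H$ has the same number of edges as $G$, namely $n-1$.

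Finally, an acyclic graph on $n$ vertices with $n-1$ edges is connected and therefore a tree, so $H$ is a tree. The substantive work is entirely inside Theorem~\ref{mainthm}; once that theorem and the standard fact that larger decks determine smaller decks are available, no genuine obstacle remains, and there is no additional case analysis beyond the trivial verification that $n-\ell\ge 2$.
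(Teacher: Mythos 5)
Your proposal is correct and follows exactly the argument the paper intends: the paper's ``proof'' is the one-sentence remark preceding the corollary, namely that the $(n-\ell)$-deck determines the $2$-deck (hence the edge count) when $n-\ell\ge2$, so acyclicity from Theorem~\ref{mainthm} plus having $n-1$ edges forces any reconstruction to be a tree. No difference in substance.
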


Spinoza and West~\cite{SW} determined, for every graph $G$ with maximum degree
at most $2$, the maximum $\ell$ such that $G$ is $\ell$-reconstructible.  Their
full result is complicated to state, but a special case is that for
$n\ge2\ell+1$ (except $(n,\ell)=(5,2)$), every $n$-vertex graph with
maximum degree at most $2$ is $\ell$-reconstructible.  They also show that 
a path with $2\ell$
vertices has the same $\ell$-deck as the disjoint union of an $(\ell+1)$-cycle
and a path with $\ell-1$ vertices, so the threshold
$n\ge2\ell+1$ in both~\cite{SW} and Theorem~\ref{mainthm} is sharp.

N\'ydl~\cite{N90} conjectured that trees with at least $2\ell+1$ vertices are
weakly $\ell$-reconstructible.  This conjecture would be sharp, since
N\'ydl~\cite{N81} found two trees with $2\ell$ vertices having the same
$\ell$-deck.  The two trees are obtained from a path with $2\ell-1$ vertices by
adding one leaf, adjacent either to the central vertex of the path or to one of
its neighbors.  Kostochka and West~\cite{KW} used the results of~\cite{SW} to
give a short proof of this result of N\'ydl.  However, one counterexample to
N\'ydl's conjecture is known: Groenland, Johnston, Scott, and Tan~\cite{GJST}
obtained two $13$-vertex trees having the same $7$-deck.  Excluding this
example and incorporating the $\ell$-recognizability of trees leads to a
modification of N\'ydl's conjecture.

\begin{conjecture}
For $n\ge2\ell+1$, except when $(n,\ell)\in\{(5,2),(13,6)\}$, every $n$-vertex
tree is $\ell$-reconstructible.  The threshold on $n$ is known to be sharp.
\end{conjecture}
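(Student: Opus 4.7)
The plan is to combine Theorem~\ref{mainthm} with a separate proof of weak $\ell$-reconstructibility of $n$-vertex trees. When $n-\ell\ge2$, the $(n-\ell)$-deck determines the $2$-deck and hence the number of edges; together with Theorem~\ref{mainthm}, this reduces the conjecture to showing that no two non-isomorphic $n$-vertex trees share the same $(n-\ell)$-deck, provided $n\ge2\ell+1$ and $(n,\ell)\notin\{(5,2),(13,6)\}$. This is essentially a corrected form of N\'ydl's conjecture.

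My approach would be strong induction on $\ell$, using the generalized Kelly Lemma: from the $(n-\ell)$-deck one can count, for every graph $H$ on at most $n-\ell$ vertices, the number of induced copies of $H$ in the unknown tree $T$. The base case $\ell=1$ is classical. In the inductive step, I would first harvest a rich collection of invariants of $T$ from its deck: the degree sequence (via counts of stars $K_{1,k}$), the number of leaves, the diameter, the counts of paths $P_k$, counts of small subtrees, and counts of rooted end-structures obtained by deletions near leaves.

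I would then split the argument by the diameter $d$ of $T$. When $d$ is small relative to $n$, $T$ is a spider, caterpillar, or a close variant, and explicit leaf-multiplicity and branch-length counts should pin down the isomorphism type. When $d$ is large, I would fix a longest path $P$ in $T$ and use cards obtained by deleting vertices near one end of $P$ to identify an ``end-tree'' that can be peeled off; what remains is a smaller tree whose reconstructibility follows by induction on $n$ (with $\ell$ fixed) or on $\ell$, after which one verifies that the end-tree attaches in a unique way. Swap arguments that compare candidate trees differing by a small local modification near high-degree vertices or along $P$ would show that the deck detects any such discrepancy.

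The main obstacle is the $(n,\ell)=(13,6)$ counterexample of Groenland--Johnston--Scott--Tan, which shows that the proof must detect something very subtle: near $n=2\ell+1$ there exist near-critical pairs of trees whose $(n-\ell)$-decks agree on every small Kelly-type invariant and differ only in rather global counts. A successful proof would have to classify \emph{all} pairs of non-isomorphic $n$-vertex trees with matching $(n-\ell)$-decks for $n\in\{2\ell,2\ell+1\}$ and verify that the only collision at $n=2\ell+1$ is the known one. This classification, which is essentially equivalent to the full conjecture, is where almost all of the difficulty lies; the regime $n$ substantially larger than $2\ell+1$ should yield to the inductive pruning framework sketched above.
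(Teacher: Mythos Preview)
The statement you are attempting to prove is a \emph{conjecture}, not a theorem; the paper does not contain a proof of it. The paper proves $\ell$-recognizability of acyclicity (Theorem~\ref{mainthm}) and hence $\ell$-recognizability of trees, but explicitly leaves full $\ell$-reconstructibility of trees at the threshold $n\ge 2\ell+1$ as an open problem. The best bounds the paper cites are $n\ge 9\ell+24\sqrt{2\ell}+o(\sqrt\ell)$ from~\cite{GJST} and $n\ge 6\ell+11$ from the companion paper~\cite{KNWZ}, both far above $2\ell+1$. So there is nothing to compare your proposal against.

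As for the proposal itself, it is not a proof but a program, and you concede this in the final paragraph: you reduce the conjecture to ``classify[ing] all pairs of non-isomorphic $n$-vertex trees with matching $(n-\ell)$-decks for $n\in\{2\ell,2\ell+1\}$'' and then note that this classification ``is essentially equivalent to the full conjecture.'' That is circular. The inductive peeling idea in the large-diameter case is roughly the strategy behind the $n\ge 6\ell+11$ and $n\ge 9\ell+\cdots$ results, but pushing it down to $n=2\ell+1$ is exactly where all known methods fail, as the $(13,6)$ example of~\cite{GJST} demonstrates. Your sketch does not supply any new idea for that regime; it simply records that the regime is hard. If you wish to contribute here, the honest target is to improve the linear constant below $6$, not to claim the sharp threshold.
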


For $\ell=2$, the threshold on $n$ must be at least $6$ due to the graphs 
in Figure~\ref{52ex}.  Giles~\cite{Gil}
proved that trees with at least six vertices are $2$-reconstructible (using
only the connected members of the deck).  For general $\ell$, Groenland et
al.~\cite{GJST} proved that $n\ge9\ell+24\sqrt{2\ell}+o(\sqrt\ell)$ suffices
for $\ell$-reconstructibility of $n$-vertex trees.  In~\cite{KNWZ}, the
present authors prove that $n\ge6\ell+11$ suffices.

Besides acyclicity, another fundamental property of trees is connectedness.
Spinoza and West~\cite{SW} proved that connectedness is $\ell$-recognizable for
$n$-vertex graphs when $n>2\ell^{(\ell+1)^2}$.  Later, Groenland et
al.~\cite{GJST} reduced the general threshold to $n\ge 10\ell$.
Manvel~\cite{M74} proved that connectedness is $2$-recognizable
for graphs with at least six vertices, and the present authors~\cite{KNWZ2}
proved that connectedness is $3$-recognizable for graphs with at least seven
vertices.  Spinoza and West~\cite{SW} suggested that (except for
$(n,\ell)=(5,2)$), connectedness is recognizable for $n$-vertex graphs when
$n\ge2\ell+1$.

The $(n-\ell)$-deck of a graph is {\it acyclic} if each card in the deck is
acyclic.  As a step toward the threshold on $n$ for $\ell$-recognizability of 
connectedness, one can consider the special case of $n$-vertex graphs whose
$(n-\ell)$-deck is acyclic.  Our result in this paper settles the question for
graphs with $n-1$ edges, where connectedness and acyclicity are equivalent
(the number of edges is known from the $2$-deck).  This suggests other detailed
questions.

\begin{problem}
For $\ell\ge1$ and $c\ge0$, determine the smallest thresholds $N_{\ell,c}$ and
$N'_{\ell,c}$ such that for all $n$-vertex graphs with $n+c$ edges whose
$(n-\ell)$-deck $\cD$ is acyclic,

(a) if $n\ge N_{\ell,c}$, then $\cD$ determines whether the graph
is connected, and

(b) if $n\ge N'_{\ell,c}$, then the graph is connected.

\noindent
The thresholds when the deck is not required to be acyclic are also unknown.
\end{problem}

We note that $N'_{\ell,1}=2\ell$.
For the upper bound, consider a disconnected $n$-vertex graph with an acyclic
$(n-\ell)$-deck, where $n\ge2\ell$.  A smallest component must be acyclic,
since a cycle would have length at most $n/2$ and be seen in the deck.  Hence
some other component $H$ must have at least $\C{V(H)}+2$ edges.  However, a
$p$-vertex graph with at least $p+2$ edges has girth at most $\FL{(p+2)/2}$
(see Exercise 5.4.36 of~\cite{West}, for example), yielding a cycle in a card.
For the lower bound, we seek a disconnected graph with $2\ell-1$ vertices whose
$(\ell-1)$-deck is acyclic.  When $\ell$ is even, the graph consists of
an isolated vertex plus four paths of length $\ell/2$ with common endpoints.
When $\ell$ is odd, the nontrivial component consists of a cycle of length
$2\ell-2$ plus two diametric chords creating cycles of length $\ell$ (this
example was contributed by a referee).  It is possible that the threshold
$N_{\ell,1}$ for determining connectedness from the $(n-\ell)$-deck is smaller.

For $c=0$, we conjecture $N_{\ell,0}=2\ell-1$.
The lower bound holds because a $(2\ell-2)$-cycle and the
disjoint union of two $(\ell-1)$-cycles have the same
$(\ell-2)$-deck.  Zirlin~\cite{Z} proved $N_{\ell,0}\le 2\ell+1$ for
$\ell\ge3$, and she proved $N_{\ell,0}=2\ell-1$ for $\ell\ge45$.

In Section~\ref{Stools} we develop tools that are useful for reconstructing
information from acyclic decks.  In Section~\ref{S2l2} we prove that
$n\ge2\ell+2$ suffices for $\ell$-recognizability of $n$-vertex acyclic graphs.
In Section~\ref{S2l1} we obtain the sharp threshold, $2\ell+1$.

\section{Vines, Diameter, and Marking}\label{Stools}

Let $\cD$ be the $(n-\ell)$-deck of an $n$-vertex graph $G$ (we henceforth just
call it the ``deck'').  We will assume $n>2\ell$.  The members of $\cD$ are the
``cards'' in the deck.

\begin{definition}
The {\it eccentricity} $\eps_G(v)$ of a vertex $v$ in a graph $G$ is the
maximum of the distances from $v$ to other vertices.  The {\it radius} is
$\min_{v\in V(G)}\eps_G(v)$, and the {\it diameter} is
$\max_{v\in V(G)}\eps_G(v)$.
A {\it center} of $G$ is a vertex of minimum eccentricity.

In $G$, the {\it $j$-ball} at a vertex $v$ is the
subgraph induced by all vertices within distance $j$ of $v$ in $G$.  The
{\it $j$-eball} at an edge $e$ is the subgraph induced by all vertices within
distance $j$ of either endpoint of $e$.  A {\it $j$-vine} or {\it $j$-evine} is
a tree having diameter $2j$ or $2j+1$, respectively.  A {\it $j$-center} is
a vertex that is the center of a $j$-vine; a {\it $j$-central edge} is the
central edge of a $j$-evine (joining the two centers).
\end{definition}

The term ``$j$-vine'' follows the botanical theme in terminology about trees;
a vine grows from its main path.  Note that if the $j$-ball at a vertex $v$
in a graph $G$ is a tree but does not contain a path with $2j+1$ vertices,
then $v$ is not a $j$-center.  When $v$ {\it is} a $j$-center, the maximal
$j$-vine with center $v$ is just the $j$-ball at $v$.

We will be interested in $j$-vines and $j$-evines that are induced subgraphs
of every reconstruction from the given deck.  Our aim is to consider an acyclic
and a nonacyclic graph having the same deck, show that they have the same
number of $j$-centers for an appropriate value $j$, and obtain a contradiction
by showing that they cannot have the same number of $j$-centers.  The key
property that will permit counting the $j$-centers in a reconstruction is in
the next lemma; it implies (under the girth condition) that maximal $j$-vines
correspond bijectively to centers of $j$-vines (that is, $j$-centers),
and similarly for $j$-evines.

\begin{lemma}\label{vinemax}
In a graph $G$ with girth at least $2j+2$, every $j$-vine lies in a unique
maximal $j$-vine.  If $G$ has girth at least $2j+3$, then every $j$-evine lies
in a unique maximal $j$-evine.
\end{lemma}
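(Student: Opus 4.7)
The plan is to show that the maximal $j$-vine containing a given $j$-vine $T$ with center $v$ is simply the $j$-ball $B$ at $v$. I would verify three things: (i) $B$ is a tree; (ii) $B$ is itself a $j$-vine with center $v$; (iii) every $j$-vine containing $T$ lies in $B$. Together these make $B$ the unique maximal $j$-vine containing $T$.

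For (i), take a BFS tree $F$ of $G$ rooted at $v$; its restriction to the $j$-ball is a spanning subtree of $B$. Any edge $xy$ of $B$ outside $F$ would close, together with the $v$-$x$ and $v$-$y$ paths of $F$, a cycle of length at most $d_G(v,x)+d_G(v,y)+1\le 2j+1$, contradicting $g(G)\ge 2j+2$. For (ii), every vertex of $B$ lies within distance $j$ of $v$, so $\diam(B)\le 2j$. Two leaves $a,b$ of $T$ with $d_T(a,b)=2j$ lie in $B$; by uniqueness of paths in the tree $B$, the $a$-$b$ path in $B$ coincides with the $a$-$b$ path in the subtree $T$, so $d_B(a,b)=2j$ and the midpoint $v$ of this path is the unique center of $B$. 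For (iii), if $T'$ is any $j$-vine containing $T$, then $a,b$ realize $\diam(T')=2j$ in $T'$ as well, and the same path-uniqueness argument identifies the unique center of $T'$ as $v$; hence every vertex of $T'$ lies within distance $j$ of $v$, giving $T'\subseteq B$.

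The $j$-evine case follows the same template with the $j$-eball $B_e$ at the central edge $e=vw$. For tree-ness of $B_e$, I would contract $e$ to a single vertex $z$, producing a simple graph $G'$ (simple because $g(G)\ge 2j+3\ge 5$ rules out a common neighbor of $v$ and $w$) with $g(G')\ge g(G)-1\ge 2j+2$. Applying (i) to $G'$, the $j$-ball at $z$ in $G'$ is a tree; un-contracting $z$ back to the edge $vw$ just splits one vertex into two adjacent ones and yields the tree $B_e$. Every vertex of $B_e$ lies within distance $j$ of some endpoint of $e$, so $\diam(B_e)\le 2j+1$; this is realized by the diametric pair of $T$, and since the unique $a$-$b$ path in the tree $B_e$ agrees with the one in $T$, the central edge of $B_e$ is $e$. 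Because an odd-diameter tree has a unique bicenter, every $j$-evine containing $T$ has central edge $e$ and therefore lies in $B_e$.

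The main obstacle is step (i): verifying that the girth bound $2j+2$ (respectively $2j+3$) is exactly strong enough to force tree-ness of $B$ (respectively $B_e$). Once that is in hand, the remaining assertions reduce to standard facts about unique centers, unique bicenters, and unique paths in trees.
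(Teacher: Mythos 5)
Your proof is correct and follows the same overall strategy as the paper: the unique maximal $j$-vine (resp.\ $j$-evine) containing a given one is identified as the $j$-ball (resp.\ $j$-eball) at its center (resp.\ central edge), and the girth hypothesis is used exactly to show that this ball is a tree. The only differences are in execution: the paper disposes of acyclicity in one line by observing that a shortest cycle inside the ball would have length at most $2j+1$ (resp.\ $2j+2$), whereas you take a BFS spanning tree and bound the fundamental cycle of any non-tree edge, and you reduce the $j$-eball case to the $j$-ball case by contracting the central edge rather than running the parallel argument. Both variants are sound, and your write-up makes explicit the unique-center, unique-bicenter, and unique-path facts that the paper leaves implicit.
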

\begin{proof}
When $H$ is a $j$-vine or a $j$-evine, let $H'$ be the $j$-ball or $j$-eball in
$G$ at the center(s) of $H$, respectively.  All vertices in any $j$-vine or
$j$-evine containing $H$ lie in $H'$.  Thus $H'$ is the desired unique maximal
object unless it contains a cycle.

Let $Q$ be a shortest cycle in $H'$.  Because the vertex or vertices on $Q$
that are farthest from the center of $H'$ have distance at most $j$ from the
center, $Q$ has at most $2j+1$ vertices if $H'$ has a unique center and at most
$2j+2$ vertices if $H'$ has a central edge, contradicting the hypothesis on the
girth of $G$.
\end{proof}

\begin{example}
To see that the girth condition in Lemma~\ref{vinemax} is sharp, let
$G$ be a graph consisting of a $(2j+1)$-cycle $Q$ plus two paths of length $j$
grown from a single vertex $v$ on $Q$.  Deleting from $G$ the two vertices of
$Q$ that are farthest from $v$ yields a $j$-vine $H$.  Replacing either one of
those vertices yields a maximal $j$-vine in $G$ containing $H$, so the maximal
$j$-vine containing $H$ is not unique.  An analogous example for $j$-evines
consists of a $(2j+2)$-cycle plus paths of length $j$ grown from two
consecutive vertices.
\end{example}

\begin{definition}
Given a family $\cF$ of graphs, an {\it $\cF$-subgraph} of a graph $G$ is an
induced subgraph of $G$ belonging to $\cF$.  Let $s(F,G)$ denote the number of
occurrences of $F$ as an induced subgraph of $G$.  Let $m(F,G)$ be the number
of occurrences of $F$ as a maximal $\cF$-subgraph in $G$ (with respect to
induced subgraphs).
\end{definition}

The special case $\ell=1$ of the next lemma is due to Greenwell and
Hemminger~\cite{GH}.  Similar statements for general $\ell$ appear for 
example in~\cite{GJST}.  We include a proof for completeness; it is slightly
simpler than proofs in the literature involving inclusion chains of subgraphs.

\begin{lemma}\label{counting}
Fix an $n$-vertex graph $G$, and let $\cF$ be a family of graphs such that
every $\cF$-subgraph of $G$ lies in a unique maximal $\cF$-subgraph of $G$.
If the value of $m(F,G)$ is known for every $F\in\cF$ with at least $n-\ell$
vertices, then for all $F\in \cF$ the $(n-\ell)$-deck of $G$ determines
$m(F,G)$.
\end{lemma}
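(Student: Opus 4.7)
The plan is to determine $m(F,G)$ by downward induction on $|V(F)|$, using the hypothesis as the base case and exploiting the unique-maximal-extension property to express the unknown $m(F,G)$ in terms of quantities already determined. First I would recall the standard fact that the $(n-\ell)$-deck determines $s(F,G)$ for every graph $F$ with $|V(F)|\le n-\ell$: each induced copy of $F$ in $G$ appears in a predictable number of induced $(n-\ell)$-vertex subgraphs, so a Kelly-style counting identity recovers $s(F,G)$ from the multiset of cards.

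Next I would set up the key identity. Because every $\cF$-subgraph of $G$ lies in a unique maximal $\cF$-subgraph, each induced copy of $F$ in $G$ (which, since $F\in\cF$, is itself an $\cF$-subgraph) belongs to exactly one maximal $\cF$-subgraph. Grouping copies of $F$ by the isomorphism type of their maximal container gives
$$s(F,G)=\sum_{H\in\cF} m(H,G)\,s(F,H).$$
An induced copy of $F$ inside a graph $H$ with $|V(H)|=|V(F)|$ forces $H\cong F$, and $s(F,F)=1$; separating that term yields
$$m(F,G)=s(F,G)-\sum_{\substack{H\in\cF\\ |V(H)|>|V(F)|}} m(H,G)\,s(F,H).$$

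Finally I would run the induction downward on $|V(F)|$. The base case $|V(F)|\ge n-\ell$ holds by hypothesis. For $|V(F)|<n-\ell$, the value $s(F,G)$ on the right is determined by the deck, each $s(F,H)$ is a fixed integer depending only on the graphs $F$ and $H$ (not on $G$), and each $m(H,G)$ appearing in the sum has $|V(H)|>|V(F)|$ and so was determined at an earlier stage of the induction (or directly by hypothesis once $|V(H)|\ge n-\ell$). Hence $m(F,G)$ is determined. I do not expect serious obstacles: the only real content is the unique-maximal-extension hypothesis, which is precisely what keeps the partition of copies of $F$ into maximal containers clean and the displayed identity valid; without it a single copy of $F$ could sit inside several maximal $\cF$-subgraphs and be overcounted, and the inductive solve would fail.
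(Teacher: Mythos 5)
Your proposal is correct and follows essentially the same route as the paper: downward induction on the number of vertices of $F$ (equivalently, induction on $n-|V(F)|$ with base case given by the hypothesis), the same grouping of induced copies of $F$ by their unique maximal $\cF$-container to get the identity $s(F,G)=\sum_{H\in\cF}s(F,H)\,m(H,G)$, and then solving for $m(F,G)$ since all other terms are known. The only cosmetic difference is that you explicitly isolate the $H\cong F$ term using $s(F,F)=1$, which the paper leaves implicit when it says one can ``solve for $m(F,G)$.''
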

\begin{proof}
Let $t=\C{V(G)}-\C{V(F)}$; we use induction on $t$.  When $t\le\ell$, the value
$m(F,G)$ is given.  When $t>\ell$, group the induced subgraphs of $G$
isomorphic to $F$ according to the unique maximal $\cF$-subgraph of $G$
containing them (as an induced subgraph).  Counting all copies of $F$ then
yields
$$s(F,G)=\sum_{H\in{\cF}} s(F,H)m(H,G).$$
Since $\C{V(F)}<n-\ell$, we know $s(F,G)$ from the deck, and we know
$s(F,H)$ when $F$ and $H$ are known.  By the induction hypothesis, we know all
values of the form $m(H,G)$ when $F$ is an induced subgraph of $H$ except
$m(F,G)$.  Therefore, we can solve for $m(F,G)$.
\end{proof}

Before continuing with preparation for $\ell$-recognizability of acyclicity,
we note one application of Lemma~\ref{counting} that was stated incorrectly in
the paper by Kostochka and West~\cite{KW}; it also illustrates the technique
we use with $j$-vines.  The special case for $\ell=1$ was observed by
Kelly~\cite{Kel2} using different methods.
Let $P_n$ and $C_n$ respectively denote a path and a cycle with $n$ vertices,
and let $G+H$ denote the disjoint union of graphs $G$ and $H$.

\begin{corollary}
If $n>2\ell$, then $n$-vertex graphs having no component with more than
$n-\ell$ vertices are $\ell$-reconstructible, and this threshold on $n$ is 
sharp.  All $n$-vertex graphs having no component with at least $n-\ell$
vertices are $\ell$-reconstructible, with no restriction on $n$.
\end{corollary}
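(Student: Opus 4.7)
The plan is to apply Lemma~\ref{counting} with $\cF$ taken to be the family of all connected graphs. For any host graph $G$, every connected induced subgraph lies in the unique component containing its vertex set, which is the unique maximal such extension; thus $m(F,G)$ counts the components of $G$ isomorphic to $F$, and knowing these values for every connected $F$ determines $G$. The deck determines $s(F,G)$ for each $F$ with $|V(F)|\le n-\ell$ (as used inside the proof of Lemma~\ref{counting}), so the task reduces to pinning down $m(F,G)$ when $|V(F)|\ge n-\ell$.

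For the second statement, assume every component of $G$ has fewer than $n-\ell$ vertices. Then $m(F,G)=0$ and $s(F,G)=0$ for every connected $F$ with $|V(F)|\ge n-\ell$. For any $G'$ sharing the deck, the deck forces $s(F,G')=0$ on this range, so $G'$ also has no component of size $\ge n-\ell$ (any connected graph on at least $n-\ell$ vertices contains a connected induced subgraph on exactly $n-\ell$ vertices, obtained by peeling leaves of a spanning tree). Applying Lemma~\ref{counting} to each graph with these identical (zero) inputs yields identical component multisets, so $G'=G$ with no restriction on $n$.

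For the first statement, assume $n>2\ell$ and every component of $G$ has at most $n-\ell$ vertices. Because $n>2\ell$, no $n$-vertex graph admits two components of size $\ge n-\ell$, and in $G$ every connected induced subgraph of size $n-\ell$ is already a full component; hence the total $q(X):=\sum s(H,X)$ over connected $H$ with $|V(H)|=n-\ell$ equals the number of such components of $X=G$, giving $q(G)\le 1$. The deck determines $q$, so $q(G')\le 1$ for any $G'$ with the same deck. If $G'$ had a component $C^*$ on $m>n-\ell$ vertices, the sublemma below would force $C^*$ alone to contribute at least $2$ to $q(G')$, a contradiction; hence $G'$ has no component larger than $n-\ell$. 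Then the same input values as for $G$ (namely $m(H,\cdot)=s(H,\cdot)$ for connected $H$ with $|V(H)|=n-\ell$, and $m(F,\cdot)=0$ for larger $|V(F)|$) let Lemma~\ref{counting} give $G'=G$.

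The sublemma is the only real obstacle: \emph{every connected graph on $m\ge 2$ vertices has at least two induced connected subgraphs of each size $k$ with $1\le k\le m-1$.} Fix a spanning tree $T$ of the graph; each $k$-vertex subtree of $T$ is an induced connected subgraph of the full graph, so it suffices to exhibit two distinct $k$-vertex subtrees of $T$. Pick distinct leaves $v_1,v_2$ of $T$ (any tree on at least two vertices has two leaves). Build the first subtree by iteratively peeling leaves of $T$ while always keeping $v_2$, and build the second by first peeling $v_2$ and then iteratively peeling leaves while always keeping $v_1$; at every intermediate stage the current tree still has at least two leaves, so a leaf different from the one to be preserved is always available to peel. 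The resulting subtrees differ in whether they contain $v_2$. Finally, sharpness of the threshold $n>2\ell$ in the first statement is witnessed already at $(\ell,n)=(2,4)$: the graphs $2K_2$ and $P_3+K_1$ share the $2$-deck (two $K_2$ cards and four $2K_1$ cards), while $2K_2$ has all components of size $n-\ell$.
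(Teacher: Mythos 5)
Your reconstruction argument is correct and follows essentially the same route as the paper: take $\cF$ to be the family of connected graphs, recognize the hypothesis by counting connected cards, observe that $m(F,\cdot)$ is known (indeed equal to $s(F,\cdot)$ or to $0$) for all connected $F$ with at least $n-\ell$ vertices, and invoke Lemma~\ref{counting}. You actually supply more detail than the paper does at the recognizability step: the paper simply asserts that ``no component has more than $n-\ell$ vertices if and only if there is at most one connected $(n-\ell)$-card,'' while your sublemma (two distinct $k$-vertex subtrees of a spanning tree, one containing a chosen leaf and one not) gives an explicit proof of the nontrivial direction. That sublemma and its use are sound.

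The one genuine gap is in the sharpness claim. Your example $2K_2$ versus $P_3+K_1$ only witnesses sharpness for $\ell=2$; the corollary asserts that the threshold $n>2\ell$ is sharp, which requires an example with $n=2\ell$ for every $\ell$. The paper handles this with the pair $P_\ell+P_\ell$ and $P_{\ell+1}+P_{\ell-1}$, whose equality of $\ell$-decks is not a direct computation but follows from the Spinoza--West result that any two graphs with equal numbers of vertices and edges whose components are sufficiently long paths or cycles have the same $j$-deck. You would need either to cite such a result or to prove that these two forests (or some other family indexed by $\ell$) share their $\ell$-deck for all $\ell$, not just $\ell=2$.
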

\begin{proof}
Let $\cF$ be the family of connected graphs; $\cF$ satisfies the
property stated in the first sentence of Lemma~\ref{counting} for any $G$.

Now consider $m(F,G)$ for $F\in\cF$.  When $n>2\ell$, an $n$-vertex graph has
at most one component with at least $n-\ell$ vertices, and it has no component
with more vertices if and only if it has at most one connected $(n-\ell)$-card.
Hence the hypothesized condition here is $\ell$-recognizable.  If some
component has exactly $n-\ell$ vertices, then it is seen as a card.  Hence
$m(F,G)$ is known for $F\in\cF$ with at least $n-\ell$ vertices, and by using
Lemma~\ref{counting} we obtain all the components of $G$ under either 
hypothesis in the statement.

The result is sharp, since $P_{\ell}+P_{\ell}$ and $P_{\ell+1}+P_{\ell-1}$ have
the same $\ell$-deck.  This follows from the result of Spinoza and
West~\cite{SW} that any two graphs with the same number of vertices and edges
whose components are all cycles with at least $j+1$ vertices or paths with
at least $j-1$ vertices have the same $j$-deck.
\end{proof}

When we speak of $j$-vines and $j$-evines in an $n$-vertex graph $G$,
we always consider only induced subgraphs.  For a given graph $G$,
a particular value of $j$ determined by the $(n-\ell)$-deck of $G$ will be of
interest.  Recall that we require $n\ge2\ell+1$ and $\ell\ge1$, so
$n-\ell\ge2$.

\begin{definition}\label{defk}
For a given graph $G$, let $k$ denote the largest integer such that $G$
contains a $k$-evine and, for $0\le j\le k$, every $j$-evine and every $j$-vine
in $G$ has fewer than $n-\ell$ vertices.  Since every edge is a $0$-evine,
$k$ is well-defined.  {\bf This fixes $k$ in terms of $G$ for the
remainder of the paper.}
\end{definition}

We consider $n$-vertex reconstructions from an acyclic $(n-\ell)$-deck $\cD$.

\begin{lemma}\label{kprop}
The value $k$ is determined by the deck $\cD$ of $G$.  That is, all
reconstructions from $\cD$ have the same value of $k$.
\end{lemma}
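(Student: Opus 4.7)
The plan is to show that for each $j \ge 0$, both clauses in Definition~\ref{defk}---existence of a $j$-evine in $G$, and ``every $j$-vine and every $j$-evine in $G$ has fewer than $n-\ell$ vertices''---are determined by $\cD$. Then $k$, the largest $j$ satisfying both, is also determined, so all reconstructions share the same value.

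My main tool is Kelly's lemma, which guarantees that $\cD$ determines $s(F,G)$ for every graph $F$ with $|V(F)|\le n-\ell$, and in particular the multiset of isomorphism types of cards. The substantive step I would prove is a pruning observation: if $G$ contains an induced $j$-vine on $m$ vertices with $m>2j+1$, then $G$ contains an induced $j$-vine on $m-1$ vertices; analogously for $j$-evines with $m>2j+2$. The argument is brief: since $m$ exceeds the vertex-count of the bare path $P_{2j+1}$ (respectively $P_{2j+2}$), the vine $H$ is not a path and so has at least three leaves; fixing endpoints $u,w$ of a longest path in $H$, any leaf $v\notin\{u,w\}$ satisfies $d_{H-v}(u,w)=d_H(u,w)$, so $H-v$ is a tree of the same diameter, hence a smaller induced vine of the same type.

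Iterating pruning, $G$ has a $j$-vine or $j$-evine of size at least $n-\ell$ if and only if it has one of size exactly $n-\ell$, which is then the card obtained by deleting its $\ell$ complementary vertices. Therefore the clause ``every $j$-vine and every $j$-evine in $G$ has fewer than $n-\ell$ vertices'' is equivalent to ``no card of $\cD$ is a $j$-vine or $j$-evine'', which is visible in $\cD$. Whenever this size clause is satisfied, any $j$-evine of $G$ has at most $n-\ell-1$ vertices, so the existence of a $j$-evine in $G$ is equivalent to $s(P_{2j+2},G)>0$ with $2j+2\le n-\ell-1$, again determined by $\cD$. Combining, both clauses are deck-determined for every $j$, hence so is $k$.

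The main subtlety lies in the boundary case of the pruning step: the strict inequalities $m>2j+1$ and $m>2j+2$ are essential, because removing a leaf of the bare diametrical path $P_{2j+1}$ or $P_{2j+2}$ does reduce the diameter, so the vine status is lost. Once this boundary is handled correctly, the rest is direct bookkeeping using Kelly's lemma and recognition of cards.
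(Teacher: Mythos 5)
Your overall strategy is the same as the paper's: define a candidate value readable from $\cD$ (the paper calls it $k'$, and it is given by essentially your two deck-criteria) and show it coincides with $k$ via a leaf-pruning/trimming argument. The pruning observation itself is correct as far as it goes. But there is a genuine gap at the load-bearing step. The asserted biconditional ``$G$ has a $j$-vine or $j$-evine of size at least $n-\ell$ if and only if it has one of size exactly $n-\ell$'' is false in general: pruning cannot go below the diametral path, which already has $2j+1$ (resp.\ $2j+2$) vertices, so when $2j+1>n-\ell$ a $j$-vine can exist in $G$ while no induced subgraph on $n-\ell$ vertices is a $j$-vine. Concretely, for $G=P_{10}$ and $\ell=3$ (so $n-\ell=7$), $G$ contains a $4$-vine on $9$ vertices but none on $7$ vertices, since every $4$-vine has at least $9$ vertices. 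Consequently your claim that the size clause ``is equivalent to `no card of $\cD$ is a $j$-vine or $j$-evine''' fails for such $j$, and the blanket conclusion ``both clauses are deck-determined for every $j$'' is false: for $2j+1>n-\ell$ the size clause amounts to the nonexistence in $G$ of an induced path longer than a card, which an $(n-\ell)$-deck does not determine in general. You flag this boundary as ``the main subtlety'' and then do not resolve it, but resolving it is the actual content of the lemma; everything else is routine bookkeeping.

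The repair is short but must be stated. If $j$ satisfies both clauses of Definition~\ref{defk}, then $G$ contains a $j$-evine with fewer than $n-\ell$ vertices, so $2j+2<n-\ell$; hence the maximization defining $k$ may be restricted to $j$ with $2j+2<n-\ell$. In that range one has $2i+1<2i+2<n-\ell$ for every $i\le j$, so your pruning equivalence is valid for every clause being checked, both criteria are genuinely deck-determined there, and the restricted deck-defined maximum equals $k$. This is exactly the accounting the paper performs with the trimming argument and the chain $2k'+3\le n-\ell\le 2j\le 2k'$; without some version of it your argument does not go through as written.
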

\begin{proof}
All subgraphs of $G$ having at most $n-\ell$ vertices are visible in $\cD$.
The deck thus gives a candidate for $k$.  Let $k'$ be the largest integer such
that some induced subgraph of a card is a $k'$-evine and, for $0\le j\le k'$,
every $j$-evine and every $j$-vine that appears in a card has fewer then
$n-\ell$ vertices.  Since this condition on $k'$ is satisfied by $k$, we have
$k'\ge k$.

The value $k'$ is strictly greater than $k$ if and only if $G$ contains a
$j$-evine or $j$-vine $R$ with at least $n-\ell$ vertices for some $j$
at most $k'$.  By the definition of $k'$, this $R$ is not contained in a card
and has strictly more than
$n-\ell$ vertices.  Without modifying a fixed longest path $P$, we can trim $R$
to $n-\ell$ vertices by iteratively deleting leaves outside $P$ unless we still
have more than $n-\ell$ vertices when only $P$ remains.  Therefore, since $G$
has no $j$-evine or $j$-vine with exactly $n-\ell$ vertices (by the 
definition of $k'$), we have $2j+1>n-\ell$.

On the other hand, the definition of $k'$ gives us a $k'$-evine contained in a
card and having fewer than $n-\ell$ vertices.  A longest path in this subgraph 
has $2k'+2$ vertices, so $2k'+2<n-\ell$.  We thus have
$$
2k'+3\le n-\ell\le 2j\le 2k'.
$$
This contradiction implies that $k'$ must equal $k$.
\end{proof}

\begin{lemma}\label{girth}
Every reconstruction from $\cD$ has girth at least $2k+4$.
\end{lemma}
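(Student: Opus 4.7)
The plan is to combine two simple bounds: the acyclicity of the deck forces every reconstruction to have girth exceeding $n-\ell$, while the existence of a small $k$-evine guaranteed by Definition~\ref{defk} forces $n-\ell$ to exceed $2k+2$. Chaining these inequalities then yields girth at least $2k+4$.

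For the first bound, let $G$ be any reconstruction from $\cD$ and suppose $G$ contains a cycle $C$ with $\C{V(C)}\le n-\ell$. Since $n-\C{V(C)}\ge\ell$, at least $\ell$ vertices of $G$ lie outside $V(C)$; deleting any $\ell$ of them yields an $(n-\ell)$-card that still contains $C$ as an induced subgraph, contradicting the hypothesis that $\cD$ is acyclic. Hence $G$ has girth at least $n-\ell+1$.

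For the second bound, Definition~\ref{defk} guarantees that $G$ contains a $k$-evine, and the same definition forces this $k$-evine to have fewer than $n-\ell$ vertices. Since every $k$-evine is a tree of diameter $2k+1$, it has at least $2k+2$ vertices along its longest path. Therefore $2k+2<n-\ell$, which rearranges to $n-\ell+1\ge 2k+4$. Combining this with the first bound gives the claimed girth inequality.

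I do not foresee a serious obstacle: both steps are immediate from the definitions, and Lemma~\ref{kprop} already ensures that $k$ takes the same value across all reconstructions, so one argument covers every $G$ with deck $\cD$. The only point that needs a little care is the counting $n-\C{V(C)}\ge\ell$ used to show that one can always shrink $G$ down to exactly $n-\ell$ vertices without disturbing a hypothetical short cycle.
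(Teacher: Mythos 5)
Your proof is correct. The first half---girth at least $n-\ell+1$ because a cycle on at most $n-\ell$ vertices would survive in some card---is exactly the paper's opening step. Where you diverge is in establishing $n-\ell+1\ge 2k+4$: the paper trims a shortest cycle to an induced path on exactly $n-\ell$ vertices, observes that this path is a $t$-vine or $t$-evine with $t=\lfloor(n-\ell-1)/2\rfloor$ having $n-\ell$ vertices, and concludes $k<t$ from Definition~\ref{defk}, whence $2k+4\le 2t+2\le n-\ell+1$. You instead invoke the other clause of that definition: $G$ contains a $k$-evine, every $k$-evine in $G$ has fewer than $n-\ell$ vertices, and a $k$-evine has diameter $2k+1$ and hence at least $2k+2$ vertices, giving $2k+2<n-\ell$ directly. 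Your route is a bit simpler, needs no construction from the cycle, and is certainly legitimate---the authors themselves state this same inequality at the start of the proof of Lemma~\ref{diam2k2}. One cosmetic point: after deleting $\ell$ vertices outside $V(C)$, the resulting card contains $C$ as a subgraph but not necessarily as an \emph{induced} subgraph (unless $C$ is chordless); either way the card contains a cycle, which is all you need for the contradiction.
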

\begin{proof}
The claim holds for a reconstruction $G$ having no cycle, so suppose that
$G$ has a cycle.  Since $\cD$ is acyclic, the girth of $G$ is at least
$n-\ell+1$.  Deleting some consecutive vertices from a shortest cycle yields an
induced path $R$ with $n-\ell$ vertices.  Let $t=\FL{(n-\ell-1)/2}$, so
$n-\ell+1\ge2t+2$.  The path $R$ is a $t$-vine (if $n-\ell$ is odd) or a
$t$-evine (if $n-\ell$ is even) with $n-\ell$ vertices.  The definition of $k$
thus requires $k<t$.  We compute $n-\ell+1\ge 2t+2\ge2k+4$.  Thus $G$ has girth
at least $2k+4$.
\end{proof}

\begin{corollary}\label{countk}
For $j\le k$, the deck $\cD$ determines the maximal $j$-evines and maximal
$j$-vines, with multiplicity.  Also, all reconstructions from $\cD$ have the
same numbers of $j$-centers and $j$-central edges.
\end{corollary}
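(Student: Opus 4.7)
The plan is to stitch together Lemmas \ref{vinemax}, \ref{girth}, \ref{kprop}, and \ref{counting}. Fix any reconstruction $G'$ of $\cD$ and any $j$ with $0\le j\le k$. By Lemma \ref{girth}, the girth of $G'$ is at least $2k+4$, hence at least $2j+3$. Lemma \ref{vinemax} then guarantees that in $G'$ every $j$-vine lies in a unique maximal $j$-vine and every $j$-evine lies in a unique maximal $j$-evine, which is exactly the hypothesis required by Lemma \ref{counting} with $\cF$ the family of $j$-vines, or separately the family of $j$-evines.

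Next I would supply the base case of that counting. By Lemma \ref{kprop} every reconstruction has the same value of $k$, and the definition of $k$ then forces every $j$-vine and every $j$-evine in $G'$ to have fewer than $n-\ell$ vertices. Hence $m(F,G')=0$ for each $F\in\cF$ with at least $n-\ell$ vertices, so those values are trivially known. Lemma \ref{counting} now extracts $m(F,G')$ from $\cD$ for every $F$ in either family, yielding the multiset of maximal $j$-vines and of maximal $j$-evines in $G'$ independently of the choice of reconstruction.

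To pass from maximal vines to centers, I invoke the bijection sketched in the commentary just before Lemma \ref{vinemax}. If $v$ is a $j$-center of $G'$, then the $j$-ball at $v$ is acyclic (the girth bound leaves no room for a cycle of length at most $2j+1$) and has diameter exactly $2j$, so it is the unique maximal $j$-vine with center $v$; conversely each maximal $j$-vine has a unique center, which is a $j$-center. The analogous bijection holds between $j$-central edges and maximal $j$-evines. Summing the multiplicities produced in the previous paragraph therefore gives the same number of $j$-centers, and the same number of $j$-central edges, in every reconstruction.

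I do not anticipate a real obstacle: the substantive work has already been done in choosing $k$ precisely so that every maximal $j$-vine fits inside some card (making the base case of Lemma \ref{counting} vacuous), and in proving the girth bound of Lemma \ref{girth} with enough slack to apply Lemma \ref{vinemax}. The corollary itself is the corresponding bookkeeping.
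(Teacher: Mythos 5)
Your proposal is correct and follows essentially the same route as the paper: Lemma~\ref{kprop} gives a common value of $k$, the definition of $k$ supplies the vacuous base case $m(T,G)=0$ for members of $\cF$ with at least $n-\ell$ vertices, Lemma~\ref{girth} plus Lemma~\ref{vinemax} give unique maximal $\cF$-subgraphs so that Lemma~\ref{counting} applies, and the girth bound yields the bijection between maximal $j$-vines ($j$-evines) and $j$-centers ($j$-central edges). The paper's proof is just a terser version of the same bookkeeping.
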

\begin{proof}
Fix $j$ with $j\le k$, and let $\cF$ be the family of $j$-vines or the family
of $j$-evines.  By Lemma~\ref{kprop}, all reconstructions have the same value
of $k$.  By the definition of $k$, we obtain $m(T,G)=0$ whenever $G$ is a
reconstruction from $\cD$ and $T$ is a member of $\cF$ having at least $n-\ell$
vertices.  Since $G$ has girth at least $2k+4$ (by Lemma~\ref{girth}), every
member of $\cF$ lies in a unique maximal member of $\cF$ (by
Lemma~\ref{vinemax}).  With these properties, Lemma~\ref{counting} applies to
compute $m(T,G)$ for all $T\in\cF$.

With girth at least $2k+4$, there is also a one-to-one correspondence between
the maximal $j$-vines and the $j$-centers, and similarly for the maximal
$j$-evines and $j$-central edges.  Thus we obtain the total number of
$j$-centers and the total number of $j$-central edges.
\end{proof}

Setting $j=1$ in Corollary~\ref{countk} almost provides the degree list.
Groenland et al.~\cite{GJST} proved the strong result that the degree
list is $\ell$-reconstructible for all $n$-vertex graphs whenever
$n-\ell>\sqrt{2n\log(2n)}$.  Taylor~\cite{Tay} had shown that 
$n>f(\ell)$ suffices, where $f$ is a particular function such that $f(\ell)$
is asymptotic to ${\rm e}\ell$.  For the context of acyclic decks we obtain a
simpler intermediate threshold.  For a vertex $v$ in a graph, let $N[v]$
denote the closed neighborhood of $v$ (the set of vertices equal or adjacent
to $v$).

\begin{corollary}\label{degrees}
For $n\ge2\ell+1$ with $(n,\ell)\ne (5,2)$, the degree list of any $n$-vertex
graph with an acyclic $(n-\ell)$-deck is determined by its deck.
\end{corollary}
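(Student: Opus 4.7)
The plan is to determine the degree sequence of $G$ by recovering
$n_d := |\{v \in V(G) : d_G(v) = d\}|$ for each $d$. Since $n - \ell \ge 2$,
the $(n-\ell)$-deck $\cD$ determines the $2$-deck and hence $|E(G)|$. Once
$n_d$ is known for every $d \ge 2$, the relations $n_1 = 2|E(G)| -
\sum_{d \ge 2} d\,n_d$ and $n_0 = n - \sum_{d \ge 1} n_d$ finish the degree
list.

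The main step is to count vertices of each degree $d \ge 2$ by applying
Corollary~\ref{countk} with $\cF$ equal to the family of $1$-vines (stars
$K_{1,d}$ with $d \ge 2$). By Lemma~\ref{kprop} every reconstruction has the
same value of $k$, and by Lemma~\ref{girth} every reconstruction has girth at
least $2k+4$. Provided $k \ge 1$, the girth is at least $6$, so the closed
neighborhood $N[v]$ of each vertex $v$ with $d_G(v) \ge 2$ induces the star
$K_{1,d_G(v)}$; by Lemma~\ref{vinemax} this star is the unique maximal
$1$-vine with center $v$. Hence Corollary~\ref{countk} determines
$m(K_{1,d},G) = n_d$ for every $d \ge 2$.

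The main obstacle is the case $k = 0$, where this tool is unavailable.
Here, for each reconstruction, either it has no induced $P_4$, or it contains
a $1$-vine or $1$-evine of size at least $n - \ell$ (equivalently, a vertex
of degree at least $n - \ell - 1$, or an adjacent pair with combined degree
at least $n - \ell$). In the first sub-case, the acyclic deck combined with
$n \ge \ell + 4$ (valid for every admissible $(n,\ell)$ once $(5,2)$ is
excluded and the classical $\ell = 1$ cases are set aside) forbids both
triangles and induced $4$-cycles, forcing the reconstruction to be a disjoint
union of stars; its degree list can then be read directly from the card
multiplicities, using the $j = 0$ instance of Corollary~\ref{countk} to
handle isolated vertices and isolated edges. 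In the second sub-case, the
multiplicities of the largest star cards $K_{1,n-\ell-1}$ or the analogous
largest double-star cards in $\cD$ pin down the degree of the high-degree
vertex (or the two endpoints of the high-degree edge), after which
Corollary~\ref{countk} applied to the remaining structure recovers the other
$n_d$. The restriction $(n,\ell)\ne(5,2)$ is essential:
Figure~\ref{52ex} exhibits two $5$-vertex graphs with the same acyclic
$3$-deck but different degree lists, precisely the ambiguity our analysis
must exclude.
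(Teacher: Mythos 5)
Your outline is sound where it is easy: for $k\ge1$ all $1$-vines have fewer than $n-\ell$ vertices, Corollary~\ref{countk} with $j=1$ applies, and girth at least $6$ makes the maximal $1$-vine at each vertex $v$ of degree at least $2$ equal to the star induced by $N[v]$, so $n_d=m(K_{1,d},G)$ for $d\ge2$; the bookkeeping for $n_1$ and $n_0$ from the edge count is also fine, and the $(5,2)$ pair ($C_4+K_1$ versus the $5$-vertex chair) does indeed have different degree lists. But the case $k=0$ is where essentially all of the paper's proof lives, and your treatment of it both misuses the tools and omits the key argument. Corollary~\ref{countk} is available only for $j\le k$, so when $k=0$ you may not invoke it with $j=1$ (``applied to the remaining structure''), and its $j=0$ instance returns only the numbers of vertices and edges, which does not recover the degree list of a disjoint union of stars. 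The correct tool is Lemma~\ref{counting} applied directly to the family of stars with at least three vertices: uniqueness of maximal $1$-vines already follows from the girth-$5$ condition that an acyclic deck with $n-\ell\ge4$ forces, so the one genuine obstacle --- in all of your sub-cases at once --- is supplying the required input $m(T,G)$ for stars $T$ with at least $n-\ell$ vertices, i.e., determining from the deck how many vertices have degree at least $n-\ell-1$ and what those degrees are.

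You assert that ``the multiplicities of the largest star cards $K_{1,n-\ell-1}$ \dots\ pin down the degree of the high-degree vertex,'' but that is precisely the statement that needs proof: a priori several big vertices could coexist with overlapping neighborhoods, and the number of star cards is then not an obvious function of their degrees. The paper's proof shows, using girth at least $5$ together with $n\ge2\ell+1$, that there are at most two big vertices; that with two of them the deck has exactly $2$ or exactly $\ell+2$ star cards (the latter only when $n=2\ell+1$); that with one big vertex of degree $d$ it has $\binom{d}{n-\ell-1}$ star cards; and finally that these counts are pairwise distinguishable because $\binom{d}{n-\ell-1}$ with $d\ge3$ can equal neither $2$ nor $\ell+2$ when $n=2\ell+1$. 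None of this analysis appears in your proposal, and without it the degrees of the big vertices are not determined. (A smaller point: once you work with the family of $1$-vines only, a large $1$-evine whose centers are both non-big is harmless --- all the relevant $m(T,G)$ vanish --- so your ``high-degree edge'' sub-case and the appeal to double-star cards should be dispensed with rather than proved.)
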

\begin{proof}
The case $\ell=1$ is well known: subtract the number of edges in each card from
the total number of edges.  Since $(n,\ell)\ne (5,2)$ and $n\ge 2\ell+1$, we
may thus assume $n-\ell\ge4$.

Let $G$ be an $n$-vertex graph with an acyclic $(n-\ell)$-deck $\cD$.
Since $\cD$ is acyclic and $n-\ell\ge4$, all stars are induced subgraphs.
Those with at least three vertices are the $1$-vines.

Call a vertex $v\in V(G)$ {\em big} if $d_G(v)\geq n-\ell-1$.
A vertex with degree at least $2$ in $G$ is the center of a maximal $1$-vine.
For $t\ge2$ the number of vertices with degree $t$ is the number of maximal
$1$-vines with $t+1$ vertices.  Lemma~\ref{counting} provides these values for
$t\ge2$ if we know the number of big vertices with each degree.
There are no big vertices in $G$ if and only if no card is a star.

Since $n-\ell\ge4$ and $\cD$ is acyclic,
\begin{equation}\label{acdeck}
\mbox{\em $G$ has no $3$-cycles or $4$-cycles.}
\end{equation}
If $G$ has exactly one big vertex and its degree is $d$, then
exactly $\CH d{n-\ell-1}$ cards are stars.

Suppose that $x$ and $y$ are distinct big vertices in $G$.
By~\eqref{acdeck}, $x$ and $y$ have at most one common neighbor.
When $xy\notin E(G)$,
\begin{equation}\label{no-adj}
n\ge \bigl|{N[x]\cup N[y]\bigr|}\ge 2n-2\ell-1=n+(n-2\ell-1)\geq n.
\end{equation}
It follows that $n=2\ell+1$ and $d(x)=d(y)=n-\ell-1$
and $N[x]\cup N[y]=V(G)$.
If there is a third big vertex, then since $n-\ell-1\ge3$ it has at least two
neighbors in $N[x]$ or in $N[y]$, contradicting~\eqref{acdeck}. 
We conclude that in this case $\cD$ has exactly two star cards.

If three big vertices are pairwise adjacent, then they induce a $3$-cycle,
contradicting~\eqref{acdeck}.  Hence three big vertices must include
a nonadjacent pair, reducing to the previous case.

There remains only the case of exactly two big vertices $x$ and $y$, adjacent.
Now 
\begin{equation}\label{adj}
n\ge \bigl|N[x]\cup N[y]\bigr|\ge 2n-2\ell-2=n+(n-2\ell-2)\geq n-1.
\end{equation}
If $\bigl|N[x]\cup N[y]\bigr|=n-1$, then $x$ and $y$ both have degree $n-\ell-1$
and again there are exactly two star cards.  If $\bigl|N[x]\cup N[y]\bigr|=n$,
then either $n=2\ell+1$, with $x$ and $y$ having degrees $n-\ell-1$ and
$n-\ell$, or $n=2\ell+2$ with $x$ and $y$ both having degree $n-\ell-1$.
In the former case, there are $\ell+2$ star cards (since $n-\ell=\ell+1$)
and in the latter case there are two.


We have shown that $G$ can only have one or two big vertices, and if it has
two, then $\cD$ has either two or $\ell+2$ star cards, and in the last case
$n=2\ell+1$.  Since $n-\ell\ge4$, the degree $d$ of a big vertex is at least
$3$.  Hence $\CH d{n-\ell-1}$ cannot equal $2$ and cannot equal $\ell+2$ when
$n=2\ell+1$. This makes all cases distinguishable from the others.

We now know $m(T,G)$ for any reconstruction $G$ and every star $T$ with at
least $n-\ell$ vertices, and Lemma~\ref{counting} applies to yield the number
of vertices with degree $t$ for each $t$ at least $2$.  It remains to count
vertices with degree at most $1$.  Since we know the number $m$ of edges from
the $2$-deck, the number of vertices with degree $1$ is given by subtracting
the other known degrees from $2m$, and then the remaining vertices have degree
$0$.
\end{proof}

\begin{lemma}\label{diam}
Let $\cD$ be a deck having a connected card.  Every connected card has diameter
at least $2k+2$, and some connected card has diameter at most $2k+3$.
\end{lemma}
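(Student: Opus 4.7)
The plan is to handle the two bounds separately. Since $\cD$ is acyclic, every connected card is a tree on exactly $n-\ell$ vertices.

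For the lower bound, let $C$ be any connected card and set $d=\diam(C)$. Being a tree, $C$ is a $\FL{d/2}$-vine when $d$ is even and a $\FL{d/2}$-evine when $d$ is odd. Since $C$ is an induced subgraph of $G$ with $n-\ell$ vertices, the definition of $k$ forbids $\FL{d/2}\le k$, forcing $d\ge 2k+2$.

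For the upper bound, I argue from the maximality of $k$: the conditions defining $k$ must fail at $k+1$, so either (A) some $(k+1)$-vine or $(k+1)$-evine in $G$ has at least $n-\ell$ vertices, or (B) $G$ contains no $(k+1)$-evine. In case (A), choose such a witness $R$ and iteratively delete leaves until exactly $n-\ell$ vertices remain, producing a tree $R'$. Since $R$ is induced in $G$, so is $R'$; the vertex set of $R'$ has size $n-\ell$, so $R'$ is itself a card. Because removing a leaf from a tree never increases the diameter, $\diam(R')\le\diam(R)\le 2k+3$, giving the required card.

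If case (A) fails, then by the maximality of $k$ we must be in case (B). No induced path in $G$ has length $2k+3$, since such a path would itself be a $(k+1)$-evine. Hence every induced subgraph of $G$, in particular the hypothesized connected card, has longest induced path at most $2k+2$; being a tree, its diameter equals its longest induced path, so is at most $2k+2\le 2k+3$.

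The main subtlety is organizing the case split correctly from the maximality of $k$, and verifying in case (A) that the leaf-by-leaf trimming produces a genuine card (induced in $G$, connected, with the right vertex count) whose diameter is controlled by $\diam(R)$.
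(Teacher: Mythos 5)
Your proof is correct and follows essentially the same route as the paper's: the lower bound argument is identical, and for the upper bound you likewise use the maximality of $k$ to extract a $(k+1)$-vine or $(k+1)$-evine with at least $n-\ell$ vertices and trim it to a card. Your only departures are cosmetic --- you case-split directly on how the defining condition fails at $k+1$ rather than on the diameter of a given connected card, and you delete leaves arbitrarily (using that leaf deletion cannot increase a tree's diameter) where the paper trims while preserving a fixed longest path.
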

\begin{proof}
Since the deck is acyclic, every connected card is a tree.  A connected card
with diameter at most $2k+1$ would be a $j$-vine or $j$-evine with $j\le k$
having $n-\ell$ vertices, contradicting the definition of $k$.

For the second claim, let $C$ be a connected card.  If $C$ has diameter at
least $2k+3$, then $C$ contains a path with $2k+4$ vertices, which is a
$(k+1)$-evine.  By the definition of $k$, some $(k+1)$-evine or $(k+1)$-vine
$R$ has at least $n-\ell$ vertices.  Since $n-\ell\ge 2k+4$, we can iteratively
delete leaves outside a fixed longest path in $R$ to trim it to $n-\ell$
vertices.  We thus obtain a card that is a $(k+1)$-evine or $(k+1)$-vine, which
have diameter $2k+3$ or $2k+2$, respectively.  Hence some card has diameter
at most $2k+3$.
\end{proof}

When $n\ge2\ell+2$, we will show that $\cD$ cannot have both an acyclic
reconstruction $F$ and a nonacyclic reconstruction $H$ by showing that $H$
would have more $k$-centers or $(k+1)$-centers than $F$.  We next introduce
a tool for bounding the number of $j$-centers in a forest $F$.

\begin{definition}\label{markdef}
{\it The marking process.}
Let $z$ be a central vertex of a connected $(n-\ell)$-card $C$ with radius
$j+1$ in a forest $F$.  Let $Y$ be the set of neighbors of $z$ that lie on
paths of length $j+1$ in $C$ beginning at $z$, and let $d_C=\C Y$.  In the
component of $F$ containing $C$, every $j$-center $x$ that is not in $Y$
{\bf marks} one vertex $x'$ at distance $j$ from $x$ along a path that extends
the $z,x$-path in $F$ (such a vertex exists, since $x$ is a $j$-center).
\end{definition}

Note that $d_C$ is the maximum number of edge-disjoint paths of length $j+1$ in
$C$ with endpoint $z$.  In particular, $d_C=1$ when $C$ has diameter $2j+1$ and
$d_C\ge2$ when $C$ has diameter $2j+2$.  Figure~\ref{markfig} illustrates the
marking process for a card $C$ (in bold) within a tree $F$.  Here $C$ has
radius $3$ with center $z$, we have $j=2$ and $d_C=3$, with $Y=\{y_1,y_2,y_3\}$,
and $x_i$ marks $x'_i$.  Vertices of the form $x_i$, $y_i$, and $z$ are
$j$-centers in $F$.

\begin{figure}[h]
\begin{center}
\includegraphics[scale=0.5]{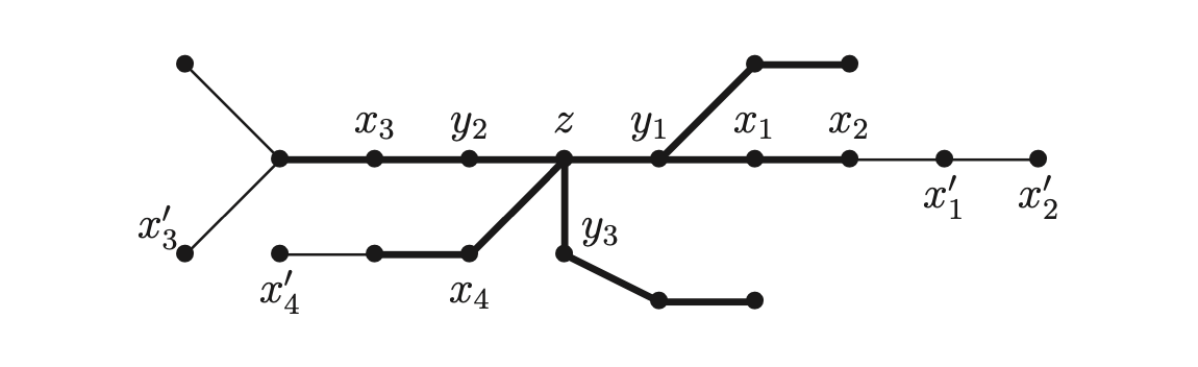}
\caption{The Marking Process}\label{markfig}
\end{center}
\end{figure}
\vspace{-.5pc}

\begin{lemma}\label{marking}
If $j\ge1$ and $C$ is a connected card with radius $j+1$ in the $(n-\ell)$-deck
$\cD$ of an $n$-vertex forest $F$, then the number of $j$-centers in $F$ is
at most $1+d_C+\ell$.  If equality holds, then in the marking process each
vertex of $F$ outside $C$ is marked and $F$ is a tree.
\end{lemma}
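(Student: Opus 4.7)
The plan is to classify the $j$-centers of $F$ into four groups and to bound one of them via the marking process. Let $T$ be the component of $F$ containing $C$, and set $U=V(T)\setminus V(C)$ and $W=V(F)\setminus V(T)$, so $|U|+|W|=\ell$. I would partition the $j$-centers of $F$ into those equal to $z$, those in $Y$, those in $V(T)\setminus(Y\cup\{z\})$, and those in $W$, with counts $n_z,n_Y,n_T,n_W$ respectively. Immediately $n_z\le 1$, $n_Y\le d_C$, and $n_W\le|W|$, so the lemma reduces to proving $n_T\le|U|$, and for this I would show that the marking restricted to this class is an injection into $U$. The main technical step is the containment $x'\in U$; injectivity is easier and follows from tree-path uniqueness.

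The key observation I would invoke is that for each $j$-center $x\in V(T)\setminus Y$, the extension chosen for $x'$ is disjoint from the $z,x$-path except at $x$, so the concatenation is a path of length $d(z,x)+j$ in the tree $T$ from $z$ to $x'$ that passes through $x$; in particular $d(z,x')=d(z,x)+j$. Injectivity then follows: if $x_1\ne x_2$ both mark the same $x'$, then $d(z,x_1)=d(z,x_2)$ and both lie on the unique $z$-$x'$ path at the same distance, forcing $x_1=x_2$; this also handles $x_1=z$ automatically, since $d(z,x')=j$ for $z$ but is at least $j+1$ for any other marker. For the containment $x'\in U$ when $x\ne z$, I would suppose instead $x'\in V(C)$ and use that $C$ is an induced subtree of $T$ to force the $T$-path from $z$ to $x'$, which passes through $x$, to lie entirely in $V(C)$. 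Then $x\in V(C)$, and if $d(z,x)\ge 2$ the $C$-path has length $\ge j+2$, exceeding the $z$-eccentricity $j+1$ in $C$, while if $d(z,x)=1$ the $C$-path is a length-$(j{+}1)$ path from $z$ through $x$, placing $x$ in $Y$. Both contradict the hypotheses, so $x'\in U$, completing the bound $n_T\le|U|$ and hence $n_z+n_Y+n_T+n_W\le 1+d_C+\ell$.

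For the equality case, suppose $n_z+n_Y+n_T+n_W=1+d_C+\ell$; then all four inequalities must be tight. In particular $n_W=|W|$ says every vertex of $W$ is a $j$-center. Since $j\ge 1$, a $j$-center has degree at least $2$, but any component of a forest contains a vertex of degree at most $1$ (an isolated vertex, or a leaf when the component has at least two vertices), so no component can have every vertex a $j$-center. Hence $W=\emptyset$, so $F=T$ is a tree. Finally, $n_T=|U|=\ell$ combined with the established injection forces the marking to be a bijection onto $U=V(F)\setminus V(C)$, so every vertex outside $C$ is marked, as claimed.
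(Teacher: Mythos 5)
Your proof is correct and follows essentially the same route as the paper: both arguments bound the $j$-centers by noting that $z$ and the vertices of $Y$ account for at most $1+d_C$, that every other $j$-center in the component of $C$ marks a distinct vertex outside $C$, and that components avoiding $C$ contribute at most their own size, with the leaf/degree observation forcing $F$ to be a tree in the equality case. The paper simply asserts the two facts you verify in detail (that the marked vertex lies outside $C$, and that marking is injective), so your write-up is a fuller version of the same argument rather than a different one.
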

\begin{proof}
Let $F'$ be the component of $F$ containing $C$, and let $\ell'$ be the number
of vertices of $F'$ outside $C$.  All $j$-centers that are neighbors of $z$ on
paths of length $j+1$ from $z$ (and $z$ itself) do not mark vertices.  All
other $j$-centers in $F'$ mark a vertex that is outside $C$.  Since $F'$ has no
cycles, every vertex of $F'$ is marked by at most one $j$-center.  Thus $F$ has
at most $\ell'+d_C+1$ $j$-centers in $F'$.

There are $\ell-\ell'$ vertices of $F$ outside $F'$, and any that have degree
at most $1$ cannot be $j$-centers (since $j\ge1$).  Hence $F$ has at most
$1+d_C+\ell$ $j$-centers, with equality only if $F$ is a tree and all vertices
outside $C$ are marked.
\end{proof}

Note that the conclusion is false when $j=0$, since every vertex is a
$0$-center.

\section{Restricting to $n=2\ell+1$}\label{S2l2}

Given an acyclic $(n-\ell)$-deck $\cD$ for $n\ge2\ell+1$, with $k$ defined as
in Section~\ref{Stools}, we have proved that all $n$-vertex reconstructions
from $\cD$ have the same number of $k$-centers and have the same number of
$k$-central edges.  Our next aim is to prove this also for $(k+1)$-centers when
$\cD$ has no card with diameter $2k+2$.  We will need connected cards, which
are guaranteed when $\cD$ has reconstructions both with and without cycles.

\begin{definition}\label{ambig}
We say that a deck $\cD$ is {\it ambiguous} if it is the $(n-\ell)$-deck of
both an $n$-vertex acyclic graph $F$ and an $n$-vertex nonacyclic graph $H$.
\end{definition}

\begin{remark}\label{fullpath}
An ambiguous deck is acyclic, since $F$ contains no cycle.  Hence when $\cD$ is
ambiguous the graph $H$ has girth at least $n-\ell+1$, and thus $\cD$ has
connected cards (in particular, paths).
\end{remark}

\begin{lemma}\label{diam2k2}
If $n\ge2\ell+1$ and an ambiguous deck $\cD$ has no card with diameter $2k+2$,
then all reconstructions have the same number of $(k+1)$-centers.
\end{lemma}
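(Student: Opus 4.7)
The plan is to apply Lemma~\ref{counting} to the family $\cF$ of all $(k+1)$-vines, exactly as in the proof of Corollary~\ref{countk} but one level higher, and then pass from maximal $(k+1)$-vines to $(k+1)$-centers via the bijection furnished by Lemma~\ref{vinemax}. To invoke Lemma~\ref{counting} I need two inputs: (i) every $(k+1)$-vine lies in a \emph{unique} maximal $(k+1)$-vine in any reconstruction, and (ii) the ``initial'' values $m(T,G)$ for every $(k+1)$-vine $T$ with at least $n-\ell$ vertices.

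Requirement (i) is immediate from the machinery already in place: Lemma~\ref{girth} forces every reconstruction to have girth at least $2k+4=2(k+1)+2$, which is exactly the hypothesis of Lemma~\ref{vinemax} applied with $j=k+1$.

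For (ii) I will show that $m(T,G)=0$ for every $(k+1)$-vine $T$ on at least $n-\ell$ vertices in any reconstruction $G$. Suppose instead that some reconstruction $G$ contains such a $T$, and fix a longest path $P$ of $T$, which has $2k+3$ vertices. The inequality $n-\ell\ge 2k+3$ was verified inside the proof of Lemma~\ref{girth}, so iteratively deleting leaves of $T$ that lie outside $P$ yields an induced subtree $T'$ of $G$ on exactly $n-\ell$ vertices that still contains $P$. Since $T'\esub T$ and $T'\supseteq P$, the longest path of $T'$ has length exactly $2k+2$, so $T'$ is a connected card of $G$ with diameter $2k+2$, contradicting the hypothesis that no card has diameter $2k+2$.

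With (i) and (ii) in hand, Lemma~\ref{counting} determines $m(T,G)$ from $\cD$ for every $(k+1)$-vine $T$; summing over maximal $T$ and using the bijection between maximal $(k+1)$-vines and $(k+1)$-centers from Lemma~\ref{vinemax} produces the number of $(k+1)$-centers of any reconstruction from $\cD$ alone. The only genuinely new content is the trimming step in (ii); the rest is a direct extension of the approach used for $j\le k$ in Corollary~\ref{countk}. I expect the trimming step to be the only real obstacle, and I note that ambiguity of $\cD$ does not appear to play any role in the argument beyond guaranteeing that the relevant reconstructions exist.
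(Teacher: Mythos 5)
Your proposal is correct and follows essentially the same route as the paper: establish girth at least $2k+4$ via Lemma~\ref{girth} so that Lemma~\ref{vinemax} applies with $j=k+1$, show by the leaf-trimming argument (using $n-\ell\ge 2k+3$, which indeed follows from the definition of $k$ since $G$ contains a $k$-evine on at least $2k+2$ vertices) that a $(k+1)$-vine with at least $n-\ell$ vertices would yield a card of diameter $2k+2$, and then invoke Lemma~\ref{counting} and the bijection with $(k+1)$-centers. The only cosmetic difference is that the paper also uses ambiguity (via Remark~\ref{fullpath} and Lemma~\ref{diam}) to confirm that $(k+1)$-vines actually exist, a step that is not needed for the stated conclusion.
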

\begin{proof}
Since $G$ has a $k$-evine and all $k$-evines have fewer than $n-\ell$ vertices,
we have $2k+2<n-\ell$.  By Remark~\ref{fullpath}, $\cD$ has a connected card.
Thus if no card has diameter $2k+2$, then some card has diameter $2k+3$, by
Lemma~\ref{diam}.  A card with diameter $2k+3$ is a $(k+1)$-evine.  In this
card are two $(k+1)$-vines, centered at the vertices of the central edge, so
$G$ has $(k+1)$-vines.

If any $(k+1)$-vine has at least $n-\ell$ vertices, then we obtain a
$(k+1)$-vine with $n-\ell$ vertices by iteratively deleting leaves outside
a fixed longest path unless $2k+3>n-\ell$, but this contradicts the inequality
$2k+2<n-\ell$.  However, a $(k+1)$-vine with $n-\ell$ vertices is a card
with diameter $2k+2$, which by hypothesis does not exist.  Hence all 
$(k+1)$-vines have fewer than $n-\ell$ vertices.

By Lemma~\ref{girth}, $G$ has girth at least $2k+4$.  Hence every $(k+1)$-vine
lies in a unique maximal $(k+1)$-vine, by Lemma~\ref{vinemax}.  Therefore,
the hypotheses of Lemma~\ref{counting} hold for the family of $(k+1)$-vines
in $G$, and the deck determines all the maximal $(k+1)$-vines in $G$, with
multiplicity (as in Corollary~\ref{countk} for $k$-vines).  The maximal
$(k+1)$-vines correspond to the $(k+1)$-centers, so we obtain the number
of $(k+1)$-centers.
\end{proof}

\begin{lemma}\label{C+Q}
Let $\cD$ be ambiguous, with $n\ge2\ell+1$ and $(n,\ell)\ne(5,2)$.  If $C$ is a
connected card in $\cD$ and $Q$ is a shortest cycle in $H$, then $C$ and $Q$
share at least four vertices and lie in a component of $H$ with at most $n-2$
vertices.  Furthermore, $C$ cannot be a star.
\end{lemma}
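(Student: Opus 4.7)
The plan is to fix a set $S \subseteq V(H)$ of size $\ell$ with $H[V(H) \setminus S] \cong C$, and to combine the girth bound $|V(Q)| \ge n - \ell + 1$ from Remark~\ref{fullpath} with the equality of degree lists of $F$ and $H$ given by Corollary~\ref{degrees}. I will establish the three assertions of the lemma in the order they appear, with each one feeding into the next.

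First I would prove the component-size claim. Let $U$ be the component of $H$ containing $V(C)$. Since $|V(Q) \cap V(C)| \ge |V(Q)| - \ell \ge n - 2\ell + 1 \ge 1$, the cycle $Q$ meets $V(C)$ and hence $V(Q) \subseteq U$. To rule out $|U| \ge n - 1$, I split into subcases. If $|U| = n$, then $H$ is connected and contains a cycle, forcing $|E(H)| \ge n$, which contradicts $|E(H)| = |E(F)| \le n - 1$ since $F$ is a forest. If $|U| = n - 1$, the vertex outside $U$ is isolated in $H$, and $H[U]$ being connected with a cycle gives $|E(H)| \ge n - 1$; this forces $F$ to be a tree, but a tree has no isolated vertex, contradicting the equality of degree lists.

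Second, I would derive $|V(Q) \cap V(C)| \ge 4$ from the component-size bound. Suppose $|V(Q) \cap V(C)| \le 3$. Then
$$|V(C) \cup V(Q)| \ge |V(C)| + |V(Q)| - 3 \ge (n-\ell) + (n-\ell+1) - 3 \ge n - 1$$
whenever $n \ge 2\ell + 1$, contradicting $V(C) \cup V(Q) \subseteq U$ and $|U| \le n - 2$. This step crucially uses the first part, since the direct bound $|V(Q) \cap V(C)| \ge n - 2\ell + 1$ gives only $\ge 2$ or $\ge 3$ in the boundary cases $n \in \{2\ell + 1, 2\ell + 2\}$.

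Finally, to exclude $C = K_{1, n-\ell-1}$, let $z$ denote the center of the star $C$. Since $Q$ is a shortest, hence chordless, cycle: if $z \in V(Q)$, then $z$'s only $H$-neighbors on $V(Q)$ are its two cycle-neighbors; if $z \notin V(Q)$, then $z$ has at most one $H$-neighbor on $V(Q)$, since otherwise two chords from $z$ would create a cycle of length at most $\FL{|V(Q)|/2} + 2 < |V(Q)|$. Every non-$z$ vertex of $V(Q) \cap V(C)$ is a leaf of $C$ and hence an $H$-neighbor of $z$, so at most two such leaves lie on $V(Q)$. Combined, $|V(Q) \cap V(C)| \le 3$, contradicting the second part. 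The main obstacle is verifying $|V(Q)| \ge 5$ for the chord argument: this is precisely where the exclusion of $(n,\ell) = (5,2)$ is needed, since for all other $(n,\ell)$ with $n \ge 2\ell + 1$ admitting an ambiguous deck one has $n - \ell + 1 \ge 5$.
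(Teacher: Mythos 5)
Your proposal is correct and follows essentially the same route as the paper: intersection of $C$ and $Q$ by counting, the component-size bound via the edge count and the degree-list equality from Corollary~\ref{degrees} (ruling out $|U|=n-1$ by the isolated vertex), then $t\ge 4$ by inclusion--exclusion, and finally the star exclusion from the fact that a shortest cycle admits no vertex with three neighbors on it. Your case split on whether the star's center lies on $Q$ is just a slightly more explicit rendering of the paper's one-line observation, and your remark that the chord argument is where $(n,\ell)\ne(5,2)$ enters (to guarantee $|V(Q)|\ge5$) is accurate.
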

\begin{proof}
Since $\cD$ is acyclic, $Q$ has at least $n-\ell+1$ vertices.  Since $C$ has
$n-\ell$ vertices and $2n-2\ell+1>n$, subgraphs $C$ and $Q$ of $H$ intersect.
Since $F$ has at most $n-1$ edges and the component $H'$ of $H$ containing
$C$ and $Q$ has at least as many edges as vertices, $H'$ cannot be all of $H$.
If $H'$ has $n-1$ vertices, then $H$ has an isolated vertex and $F$ has at
least $n-1$ edges and is a tree, with no isolated vertices.  Since $F$ and $H$
have the same degree list (by Corollary~\ref{degrees}), $H'$ therefore has at
most $n-2$ vertices.  With $t=\C{V(C)\cap V(Q)}$, we have
$n-\ell+(n-\ell+1)-t\le n-2$, so $t\ge4$.

Since $Q$ is a shortest cycle, three vertices of $Q$ cannot have a common
neighbor in or outside $Q$.  Since $t\ge4$, we conclude that $C$ cannot be a
star.
\end{proof}

Henceforth let $\cD$ be the ambiguous $(n-\ell)$-deck of reconstructions $F$
and $H$ as in Definition~\ref{ambig}, and let $k$ be as in
Definition~\ref{defk}.
In the remainder of this section we restrict the possibility of ambiguous decks
to the case $n=2\ell+1$, which completes the $\ell$-recognizability proof when
$n\ge2\ell+2$.  We leave the boundary case $n=2\ell+1$ to the next section.

When we want to use the marking process to compare the numbers of $k$-centers,
we need to exclude the possibility $k=0$, since the conclusion of
Lemma~\ref{marking} is false when $j=0$.

\begin{lemma}\label{k=0}
Let $\cD$ be ambiguous with $n\ge2\ell+1\ge5$ and $(n,\ell)\ne(5,2)$.
If $k=0$, then $n=2\ell+1$ and a card with smallest diameter is a $1$-evine
that in a nonacyclic reconstruction intersects a cycle with $\ell+2$ vertices
exactly in four consecutive vertices.
\end{lemma}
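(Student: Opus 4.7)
The plan is to first identify $C$ as a $1$-evine using the previous lemmas, and then analyze how $C$ sits inside a shortest cycle $Q$ of a nonacyclic reconstruction $H$. Since $k=0$, Lemma~\ref{diam} furnishes a connected card of diameter at most $3$, and because an ambiguous deck is acyclic (Remark~\ref{fullpath}) this card is a tree of diameter at most $3$---a star or a double-star. Lemma~\ref{C+Q} forbids stars, so the smallest-diameter card $C$ is a double-star (a $1$-evine), with centers $u,v$ and leaves $u_1,\ldots,u_a,v_1,\ldots,v_b$ where $a+b+2=n-\ell$.

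Now let $H$ be a nonacyclic reconstruction, $Q$ a shortest cycle in $H$, and set $I=|V(C)\cap V(Q)|$, $S=V(H)\setminus V(C)$, $s=|S\cap V(Q)|$, so $|V(Q)|=I+s$. Lemma~\ref{C+Q} places $V(C)\cup V(Q)$ in a common component $H'$ with $|V(H')|\le n-2$, so $s\le\ell-2$; the girth bound $|V(Q)|\ge n-\ell+1$ then forces $I\ge n-2\ell+3$. Each maximal sub-path of $V(C)\cap V(Q)$ inside $Q$ is an induced path in the double-star $C$, so it has at most $4$ vertices (the whole path $u_i$-$u$-$v$-$v_j$), at most $3$ (through exactly one center), or just $1$ (an isolated leaf whose two $Q$-neighbors both lie outside $V(C)$). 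Because $Q$ is shortest, no $H$-edge between two $V(Q)$-vertices may lie outside $E(Q)$; applied to edges of $C$, this chord restriction forbids an isolated leaf whose $C$-neighbor is a center that also lies in $V(Q)$. Since $F$ is a forest, Corollary~\ref{degrees} also gives $|E(H)|=|E(F)|\le n-1$.

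The argument now splits by which of $u,v$ lie on $Q$. If both (Case A), the chord rule forces $uv\in E(Q)$ and excludes isolated leaves, so $V(C)\cap V(Q)$ is a single central sub-path of size at most $4$. Then $|V(Q)|=I+s\le 4+(\ell-2)=\ell+2$, and combining with $|V(Q)|\ge n-\ell+1$ forces $n=2\ell+1$, $|V(Q)|=\ell+2$, $I=4$, and the sub-path is exactly $u_i$-$u$-$v$-$v_j$---four consecutive vertices of $Q$, as claimed. If only one of $u,v$, say $u$, lies on $Q$ (Case B), the sub-path containing $u$ has size $p\in\{1,2,3\}$ and the remaining vertices of $V(C)\cap V(Q)$ are isolated $v$-leaves, so $|E(C)\cap E(Q)|=p-1$ and
\[
|E(C)\cup E(Q)|=(n-\ell-1)+(I+s)-(p-1)\le |E(H)|\le n-1.
\]
Thus $I+s\le\ell+p-1$; together with $|V(Q)|\ge n-\ell+1$, this forces $n\le 2\ell+p-2\le 2\ell+1$, and equality throughout requires $p=3$, $n=2\ell+1$, and $|E(H)|=n-1$, so $F$ is a tree. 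But then all edges of $H$ lie in $V(H')$, so every vertex of $V(H)\setminus V(H')$ (at least two, by Lemma~\ref{C+Q}) is isolated in $H$, contradicting that $F$ has no isolated vertex. Finally, if neither center lies on $Q$ (Case C), every sub-path is an isolated leaf, so $|E(C)\cap E(Q)|=0$ and $(n-\ell-1)+(I+s)\le n-1$ yields $|V(Q)|\le\ell$, contradicting $|V(Q)|\ge\ell+2$.

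Only Case A survives, which establishes the lemma. The hard part is Case B: structural bounds on $I$ alone do not rule it out, and one must invoke the forest bound $|E(H)|\le n-1$ and then exploit the tight equality case by contrasting the isolated vertices that $H$ must then have with the absence of such vertices in a tree reconstruction $F$.
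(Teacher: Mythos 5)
Your proof is correct, and its first half coincides with the paper's: both identify the smallest-diameter card $C$ as a double-star via Lemmas~\ref{diam} and~\ref{C+Q}, and both finish your Case A (both centers on $Q$) the same way, using chordlessness of $Q$ together with the bound $|V(H')|\le n-2$. The divergence is in how the cases with a center of $C$ off $Q$ are eliminated. The paper uses one further consequence of $Q$ being a shortest cycle with at least five vertices: no vertex outside $Q$ has two neighbors on $Q$. This immediately gives $|V(C)\cap V(Q)|\le 3$ whenever some center is off $Q$ (an off-cycle center contributes at most one of its leaves to $Q$, and if its partner center is on $Q$ then that partner already consumes the one allowed neighbor), which contradicts the bound $t\ge 4$ that Lemma~\ref{C+Q} already supplies---no edge counting is needed, and Cases B and C vanish in two lines. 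Your substitute---bounding $|E(C)\cup E(Q)|$ by $|E(H)|\le n-1$ and, in the tight subcase of Case B, extracting isolated vertices of $H$ that the tree $F$ cannot match in its degree list---is sound but considerably longer, since it forces you to track the equality conditions ($p=3$, $E(H)=E(C)\cup E(Q)$, $F$ a tree) that the paper's one-line ``second neighbor on a shortest cycle'' observation renders unnecessary. Both routes land on $n=2\ell+1$, $|V(Q)|=\ell+2$, and the four-consecutive-vertex intersection.
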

\begin{proof}
Since $\cD$ is acyclic, $H$ has girth more than $n-\ell$.  Hence some card is
a path; it has at least four vertices, since $n-\ell\ge4$.  Thus any
reconstruction contains a $1$-evine, so $k=0$ requires a card that is a
$1$-evine or a $1$-vine.  By Lemma~\ref{C+Q}, no card is a $1$-vine (a star).

Hence a card $C$ with smallest diameter is a $1$-evine.  That is, $C$ is a tree
with two non-leaf vertices.  By Lemma~\ref{C+Q}, $C$ and a shortest
cycle $Q$ share at least four vertices in $H$.  Since $Q$ has at least five
vertices and is a shortest cycle in $H$, in $H$ there is no chord of $Q$ and no
vertex outside $Q$ with two neighbors in $Q$.  Hence each central vertex of
$C$ has only one neighbor that is in $Q$, so there are at most four vertices
of $C$ in $Q$, with equality only if they are two leaves and the two central
vertices of $C$ and occur consecutively along $Q$.

With $\C{V(C\cap Q)}=4$, we have $\C{V(C\cup Q)}\ge 2n-2\ell+1-4$.
Lemma~\ref{C+Q} then implies $2n-2\ell-3\le n-2$, which simplifies to
$n\le 2\ell+1$.  With $n=2\ell+1$, the cycle $Q$ must have exactly $\ell+2$
vertices.
\end{proof}

\begin{theorem}\label{2l+1}
Given $n\ge2\ell+1$ with $\ell\ge2$, let $\cD$ be an ambiguous deck with
acyclic reconstruction $F$ and nonacyclic reconstruction $H$ for which $k\ge1$,
and let $C$ be a card with minimum diameter in $\cD$.  These conditions require
$n=2\ell+1$, that $H$ has girth $\ell+2$, and that $F$ is a tree with exactly
$1+d_C+\ell$ $j$-centers, where $j$ is the radius of $C$.
\end{theorem}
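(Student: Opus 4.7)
The plan is to count centers at a single well-chosen level in $F$ and in $H$: Lemma~\ref{marking} will give an upper bound in $F$, Corollary~\ref{countk} or Lemma~\ref{diam2k2} will transfer this count to $H$, and the long shortest cycle of $H$ will supply a matching lower bound that pins everything down. Let $j$ denote the radius of $C$. By Lemma~\ref{diam}, $C$ has diameter $2k+2$ or $2k+3$, so $j\in\{k+1,k+2\}$ and $C$ is either a $(k+1)$-vine or a $(k+1)$-evine. Applying Lemma~\ref{marking} with parameter $j-1$ (so the card radius is $(j-1)+1=j$) gives at most $1+d_C+\ell$ $(j-1)$-centers in $F$. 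When $j-1=k$ this count is transferred to $H$ by Corollary~\ref{countk}; when $j-1=k+1$, the fact that $C$ has minimum diameter $2k+3$ means no card has diameter $2k+2$, so Lemma~\ref{diam2k2} applies and gives equality for $(k+1)$-centers instead.

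Next I would extract a lower bound in $H$ from its shortest cycle $Q$. By Lemma~\ref{girth}, $H$ has girth at least $2k+4$, and by Remark~\ref{fullpath} and the definition of $k$, $Q$ has at least $n-\ell+1$ vertices. For any $v\in V(Q)$, the subpath of $Q$ of length $2(j-1)\le 2k+2$ centered at $v$ is induced: any chord would create a cycle of length at most $1+2(j-1)\le 2k+3$, contradicting the girth. Hence $v$ is a $(j-1)$-center of $H$, and $H$ has at least $n-\ell+1$ such centers. Combining the bounds gives $n-\ell+1\le 1+d_C+\ell$, i.e., $n\le 2\ell+d_C$.

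It remains to bound $d_C$ and harvest the equality case. In the evine case ($j=k+2$), the center $z$ of $C$ is one endpoint of the central edge $zz'$; since the $z$-side of $C$ has depth only $k+1$, every path of length $k+2$ from $z$ in $C$ must cross $zz'$, so $Y=\{z'\}$ and $d_C=1$. This forces $n=2\ell+1$, $|V(Q)|=\ell+2$, and girth$(H)=\ell+2$. I expect the main obstacle to be the vine case ($j=k+1$), where $d_C\ge 2$ so $n\le 2\ell+d_C$ alone is not sharp; there I would look for additional $(j-1)$-centers in $H$ beyond $V(Q)$, exploiting Lemma~\ref{C+Q} (the component containing $Q\cup C$ omits at least two vertices of $H$, forcing further structure) together with the branch-count inequality $d_C(k+1)\le n-\ell-1$ inside $C$ to drive $n$ down to $2\ell+1$ in that case as well.

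Finally, equality throughout the marking bound forces every vertex of $F$ outside $C$ to be marked, so $F$ is connected and hence a tree, with exactly $1+d_C+\ell$ centers at the relevant level, as required by the statement. The small indexing bookkeeping between $(j-1)$-centers (produced by Lemma~\ref{marking}) and $j$-centers (as the statement phrases the conclusion) is something I would reconcile at the end by noting that in this extremal configuration every $(j-1)$-center is forced to extend to a $j$-center.
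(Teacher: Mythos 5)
Your Case‑2 analysis (card of diameter $2k+3$, so $d_C=1$) matches the paper's argument and is sound, but Case 1 --- the card $C$ of diameter $2k+2$, where $d_C\ge2$ --- contains a genuine gap that you acknowledge but do not close, and counting centers there falls short by exactly one. Counting $k$-centers, Lemma~\ref{marking} bounds $F$ by $1+d_C+\ell$, while in $H$ the best available lower bound is $|V(Q)\cup(\{z\}\cup Y)|\ge q+1+d_C-3=q+d_C-2$ (when $z\in V(Q)$, the vertex $z$ and at most two of the $y_i$ can already lie on $Q$); this gives only $q\le\ell+3$, hence $n\le2\ell+2$, not $n\le 2\ell+1$. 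The paper closes this by switching currency: it counts $k$-\emph{central edges}, a quantity Corollary~\ref{countk} also makes reconstruction-invariant. In the component of $F$ containing $z$ (a tree), the $k$-central edges biject with the $k$-centers other than $z$ (each such edge is identified by its endpoint farther from $z$), so the marking bound drops to $s\le d_C+\ell$; meanwhile every edge of $Q$ is $k$-central in $H$ and at most two of the $d_C$ edges $zy_i$ lie on $Q$, so $s\ge q+d_C-2$. That one-unit gain yields $q\le\ell+2$ and hence $n\le2\ell+1$. Your proposed substitutes (extra $(j-1)$-centers via Lemma~\ref{C+Q}, the inequality $d_C(k+1)\le n-\ell-1$) are left undeveloped and do not obviously supply the missing unit.

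Two smaller points. First, your closing ``reconciliation'' of indices is both unnecessary and unjustified: the theorem's conclusion is consumed later (Lemma~\ref{fullpaths}) with $C$ of radius $j+1$, so the centers being counted are exactly those Lemma~\ref{marking} produces at level one below the radius; nothing forces a center at one level to ``extend'' to a center at the next level, and no such claim is needed. Second, even in the evine case you should spell out how equality propagates: equality throughout $n-\ell+1\le q\le s'\le 2+\ell$ (or, in Case 1, $q+d_C-2\le s\le d_C+\ell$) is what pins down $n=2\ell+1$, $q=\ell+2$, and the exact center count, after which the equality clause of Lemma~\ref{marking} forces $F$ to be a tree.
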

\begin{proof}
By Lemma~\ref{diam}, no card has diameter less than $2k+2$, but some card
has diameter at most $2k+3$.  Let $Q$ be a shortest cycle in $H$, with length
$q$.  By Lemma~\ref{girth}, $q\ge2k+4$.

{\bf Case 1:}
{\it $C$ has diameter $2k+2$.}
By Corollary~\ref{countk}, $F$ and $H$ have the same number $s$ of $k$-central
edges.  Note that $C$ has radius $k+1$ and has $d_C$ $k$-central edges incident
to its unique center $z$.  These edges are also $k$-central in $F$.  An edge of
$F$ in the component containing $z$ is a $k$-central edge if and only if its
endpoint farther from $z$ is a $k$-center.  In other components, the number of
$k$-central edges is less than the number of $k$-centers.  Using $j=k$,
Lemma~\ref{marking} implies $s\le d_C+\ell$.

Among the $d_C$ $k$-central edges in $C$ incident to $z$, only two can lie in
$Q$.  Since $q\ge 2k+4$, every edge of $Q$ is a $k$-central edge in $H$.  Thus
$s\ge q+d_C-2$, and the bounds on $s$ yield $q\le\ell+2$.  With $q\ge n-\ell+1$,
we obtain $n\le2\ell+1$.  Since $n\ge2\ell+1$, we thus have $n=2\ell+1$ and
$q=\ell+2$.  The bounds on $s$ now yield $s= d_C+\ell$, so $F$ has exactly
$1+d_C+\ell$ $k$-centers, which by Lemma~\ref{marking} requires that $F$ is a
tree.

{\bf Case 2:}
{\it $C$ has diameter $2k+3$.}
By Lemma~\ref{diam2k2}, $F$ and $H$ have the same number $s'$ of
$(k+1)$-centers.  With diameter $2k+3$, $C$ has radius $k+2$ and two centers.
Let $z$ be a center in $C$.  By Lemma~\ref{marking} with $j=k+1$, we have
$s'\le 2+\ell$, since $d_{C}=1$.  Since $q\ge2k+4$, every vertex of $Q$ is a
$(k+1)$-center.  Hence $n-\ell+1\le q\le s'\le 2+\ell$, which simplifies to
$n\le 2\ell+1$.  Since $n\ge2\ell+1$, we have equality throughout, so
$n=2\ell+1$, and $q=2+\ell$, and $s'=2+\ell$, which by Lemma~\ref{marking}
requires that $F$ is a tree.
\end{proof}

\begin{corollary}\label{2l+2}
For $n\ge2\ell+2$, the family of $n$-vertex acyclic graphs is
$\ell$-recognizable.
\end{corollary}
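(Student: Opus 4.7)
The plan is to proceed by contradiction, assuming an ambiguous $(n-\ell)$-deck $\cD$ with $n \ge 2\ell+2$ in the sense of Definition \ref{ambig}, and splitting cases on the value of $k$ fixed by Definition \ref{defk}. For $\ell \ge 2$, the hypothesis $n \ge 2\ell+2 \ge 6$ already guarantees $(n,\ell) \ne (5,2)$ and $n \ge 2\ell+1 \ge 5$, so the hypotheses of both Theorem \ref{2l+1} and Lemma \ref{k=0} are available.

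If $k \ge 1$, apply Theorem \ref{2l+1} directly: it concludes that ambiguity forces $n = 2\ell+1$, contradicting $n \ge 2\ell+2$. If $k = 0$, apply Lemma \ref{k=0}: it also concludes $n = 2\ell+1$, again a contradiction. Thus no ambiguous deck exists for $\ell \ge 2$.

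The residual case $\ell = 1$ (so $n \ge 4$) is handled by a direct edge-counting argument rather than the $k$-dichotomy. The deck determines $|E(G)|$ via $(n-2)|E(G)| = \sum_{C \in \cD}|E(C)|$. If a nonacyclic $H$ has an acyclic $1$-deck, then every cycle of $H$ must meet every vertex (else a card exhibits that cycle), so $H$ contains a Hamiltonian cycle; a similar card argument rules out any chord, forcing $H = C_n$ with $n$ edges, while any acyclic reconstruction has at most $n-1$ edges. The edge counts disagree, so the decks cannot coincide.

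The main obstacles were already surmounted in Theorem \ref{2l+1} and Lemma \ref{k=0}; this corollary is essentially a bookkeeping step that unions the $k = 0$ and $k \ge 1$ cases to confine ambiguity to the boundary $n = 2\ell+1$, which is outside the hypothesis. I expect no new difficulty here beyond verifying that the side hypotheses of the two prior results (notably the girth/eccentricity assumptions implicit in the definition of $k$) hold in the $n \ge 2\ell+2$ regime.
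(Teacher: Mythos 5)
Your proposal is correct and follows essentially the same route as the paper, whose proof is the one-line observation that Theorem~\ref{2l+1} confines ambiguous decks to $n=2\ell+1$. You are in fact slightly more careful than the paper: you explicitly dispatch the $k=0$ case via Lemma~\ref{k=0} (which Theorem~\ref{2l+1} excludes by hypothesis) and give a correct direct argument for $\ell=1$ (which the paper leaves to Kelly's classical result, cited only later); both additions are sound.
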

\begin{proof}
By Theorem~\ref{2l+1}, an ambiguous deck can exist only when $n=2\ell+1$.
\end{proof}

\section{The Extreme Case $n=2\ell+1$}\label{S2l1}

The arguments of the previous section leave open the possibility of an
ambiguous deck when $n=2\ell+1$, and the graphs in Figure~\ref{52ex} yield
an ambiguous deck when $(n,\ell)=(5,2)$.  In this section we will prohibit
ambiguous decks when $n=2\ell+1$ and $\ell\ge3$, yielding the sharp threshold
on $n$ for $\ell$-recognizability of acyclicity.

Comparing the numbers of $k$-centers and $(k+1)$-centers in an acyclic and
a nonacyclic reconstruction only restricted us to $n\le 2\ell+1$.  Now that
we restrict to $n=2\ell+1$ and $\ell\ge3$.  We will distinguish these
possibilities by counting the cards that are paths (with $n-\ell$ vertices).
We will obtain a bound on this number for a special class of trees; this
is a result that may be of independent interest.  We will
eventually use the marking process to restrict the acyclic reconstruction to
this class when we have an ambiguous deck.

\begin{definition}
Fix the parameter $\ell$.  A {\it full path} in an $n$-vertex graph is a path
with $n-\ell$ vertices (a card in the $(n-\ell)$-deck).
A {\it branch vertex} in a tree is a vertex with degree at least $3$.
A {\it leg} of a non-path tree is a path in the tree whose endpoints
are a leaf and the branch vertex closest to it.
A {\it spider} is a tree with at most one branch vertex (trees with no branch
vertex are paths, in which we may designate any vertex as the ``root'' serving
the role of a branch vertex).  We denote a spider with legs of lengths
$\VEC m1d$ as $S_{\VEC m1d}$; it has $1+\SE i1d m_i$ vertices (an $n$-vertex
path can be described as $S_{m,n-1-m}$ for any $m$ with $1\le m\le n-2$.)
An $n$-vertex tree is {\it $\ell$-spiderly} if it contains a spider such that
all vertices not in the spider are within distance $(n-\ell-2)/2$ of the
branch vertex of the spider.
\end{definition}

Note that all spiders are $\ell$-spiderly.  The upper bound that we can prove
on the number of full paths in an $n$-vertex spider holds more generally for
all $\ell$-spiderly trees with $n$ vertices.

\begin{lemma}\label{spiderly}
For $n\ge2\ell+1\ge3$, every $\ell$-spiderly $n$-vertex tree contains at most
$\ell+3$ full paths, except for the spider $S_{1,1,1,1}$ when $\ell=2$.
\end{lemma}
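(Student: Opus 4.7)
The plan is to classify the full paths in $T$ according to their interaction with the branch vertex $v$ of the spider $S \subseteq T$ and bound each class separately. Set $r = \lfloor (n-\ell-2)/2\rfloor$; in the principal case $n-\ell$ even, every full path has length $2r+1$, and the case $n-\ell$ odd (full paths of length $2r+2$) will be handled analogously. Let $L_1,\dots,L_d$ be the legs of $S$ with lengths $m_1 \ge \cdots \ge m_d$, and call $L_i$ \emph{long} when $m_i \ge r+1$.

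First I will establish the structural observation that every full path has at least one endpoint at depth $\ge r+1$ from $v$, and hence on a long leg. Indeed, since non-spider vertices lie within distance $r$ of $v$, two vertices both at depth $\le r$ are at distance at most $2r$. Moreover, two endpoints in distinct branches both at depth $\ge r+1$ would have distance at least $2r+2$, so a full path through $v$ has exactly one deep endpoint. This partitions the full paths into three types: (a) $v$ is internal to the path, with endpoints at depths $a \le r$ and $2r+1-a$ in distinct branches; (b) $v$ is an endpoint, with the path running along a leg of length $\ge 2r+1$; (c) $v$ is not on the path, forcing the path into a single branch containing a long leg (branches without a long leg have diameter at most $2r-2$).

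Next I will count each type. Writing $b_i(t)$ for the number of vertices of branch $B_i$ at depth $t$ and $N(t)=\sum_i b_i(t)$, Type (a) contributes $\sum_{a=1}^{r}\sum_{i\ne j} b_i(a)\,b_j(2r+1-a)$, which simplifies because $b_j(2r+1-a)\le 1$ at deep depths $2r+1-a \ge r+1$ (only the long-leg vertex contributes). Type (b) contributes the number of legs of length $\ge 2r+1$. Type (c) contributes subpaths of long legs (at most $m_j-2r-1$ per long leg with $m_j \ge 2r+2$) plus paths using an extra as a shallow endpoint, bounded using the depth constraint on extras. Summing and using $\sum_j m_j + (\text{\# extras}) = n-1 = 2r+\ell+1$, together with the hypothesis $n \ge 2\ell+1$ (equivalently $\ell \le 2r+1$, which constrains how many long legs can fit), the total is bounded by $\ell+3$.

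The main obstacle will be the bookkeeping in Type (a) when several long legs coexist. I plan to proceed by cases on the number of long legs: zero or one gives a trivial bound; two gives a small bound from a single contributing pair; three matches the extremal configurations (such as $S_{r+1,r+1,r+1}$ for $r=1$, $S_{3,3,3,2}$ at $\ell=6$, and $S_{4,3,3}$ at $\ell=5$, each attaining exactly $\ell+3$); and four or more forces the leg lengths to be so small (via $\sum_j m_j \le 2r+\ell+1$) that the total count shrinks. The exception $S_{1,1,1,1}$ at $(n,\ell)=(5,2)$ corresponds to the degenerate parameter $r=0$, where every pair of leaves is at distance $2=2r+2$, yielding $\binom{4}{2}=6 > 5 = \ell+3$ full paths.
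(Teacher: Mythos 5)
Your approach is genuinely different from the paper's: the paper proves this lemma by taking a minimal counterexample $(T,U)$ (minimizing $n$, then the number of vertices outside the spider), showing every leaf must lie in two full paths, settling the pure-spider case by hand, and then arguing that a leaf outside $U$ forces so much length into two legs that no counterexample survives. Your plan instead classifies full paths directly by their position relative to the branch vertex $v$ and bounds each class. The structural skeleton you set up is correct in the even case: every full path has an endpoint at depth at least $r+1$, such vertices lie on long legs with at most one per depth, and branches without a long leg have diameter at most $2r-2$. If it could be completed, this would give a more transparent, non-inductive proof. However, as written there are genuine gaps, concentrated exactly where the difficulty lies.

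First, the decisive step --- ``Summing \dots the total is bounded by $\ell+3$'' --- is asserted, not performed, and the case analysis you propose for it is miscalibrated. You claim that two long legs give ``a small bound from a single contributing pair,'' but the two-long-leg case is itself extremal: for odd $\ell$ the spider $S_{1,1,\ell-1,\ell-1}$ (with $n=2\ell+1$) has exactly two long legs and exactly $\ell+3$ full paths, $\ell-1$ of them between the two long legs and $4$ more using the short legs as shallow endpoints. So the shallow vertices in \emph{all} branches (including extras, via the edge budget $\sum_j m_j + |E| = n-1$) must be charged carefully in every case with at least two long legs, not just the three-long-leg case. Relatedly, your listed extremal example $S_{3,3,3,2}$ at $\ell=6$ has only $12$ vertices and violates $n\ge 2\ell+1$, which suggests the inventory of tight configurations has not actually been verified. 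Second, the odd case ($n-\ell$ odd, full paths of length $2r+2$) is not ``analogous'' in the relevant sense: there a full path through $v$ can have \emph{two} deep endpoints (both at depth exactly $r+1$), producing a $\binom{g}{2}$-type term among legs of length $r+1$ that has no counterpart in your even-case accounting; this is precisely the mechanism behind the $S_{1,1,1,1}$ exception (and behind $S_{a,a,a,a}$ contributing $6$ paths in general), and also the constraint becomes $\ell\le 2r+2$ rather than $\ell\le 2r+1$. Until the summation is carried out separately for both parities, with the two-long-leg case treated as a tight case and the double-deep-endpoint paths counted, the proof is not complete.
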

\begin{proof}
An $\ell$-spiderly tree may allow many choices of the set $U$ inducing the
specified spider.  We thus view an instance as a pair $(T,U)$ and let
$\bU=V(T)-U$.  We consider a counterexample $(T,U)$ with smallest $n$
(over all $\ell$),
and with smallest $\C{\bU}$ among those minimizing $n$.  When $\ell=1$, an
$n$-vertex tree has at most two full paths, except that $S_{1,1,1}$ has three
full paths when $n=4$.  This is no problem, since $3<4=\ell+3$.
Hence we may assume $\ell\ge2$.

Let $z$ be the branch vertex of $T[U]$ (we may designate any vertex of $T[U]$
as $z$ when $z$ is a path, as long as the distance condition is satisfied for
vertices outside $U$).  By the minimality of $\C{\bU}$, neighbors of $z$ are in
$U$, and leaves of $T[U]$ are leaves of $T$.

Let $v$ be a leaf of $T$.  Since $(n-1)-(\ell-1)=n-\ell$, the distance
bound $(n-\ell-2)/2$ satisfied by vertices in $T$ also holds to make $T-v$
an $(\ell-1)$-spiderly tree on $n-1$ vertices.  If $T-v=S_{1,1,1,1}$ with the
forbidden parameters, then $T$ has parameters $(n,\ell)=(6,3)$, which violate
$n\ge2\ell+1$.  Therefore, we can apply the minimality of $n$ to conclude that
in $T-v$ there are at most $\ell+2$ full paths.  Full paths in $T$ not
containing $v$ are full paths in $T-v$, so if $v$ appears in at most one full
path we have the desired bound.  Hence we may assume that every leaf in $T$
appears in at least two full paths.

First consider the case $\bU=\nul$, where $T$ is a spider.  Since any leaf lies
in at least two full paths, $T$ has a branch vertex $z$.  Let $d$ be the degree
of $z$ in $T$, so $d\ge3$.  Let $v$ be the leaf in a shortest leg of $T$, with
length $a$.  If $d\ge4$, then two legs not containing $v$ must each have length
at least $n-\ell-1-a$, and some fourth leg with leaf $w$ has length at least
$a$.  Summing the lengths of these four legs yields $2n-2\ell-2\le n-1$, or
$n\le 2\ell+1$.  By the restriction to $n\ge2\ell+1$, equality holds, requiring
$T=S_{a,a,\ell-a,\ell-a}$ and $n-\ell=\ell+1$.  If $a<\ell-a$, then exactly
four full paths use $v$ or $w$ and a leg of length $\ell-a$, and $\ell-2a+1$
full paths use the two legs not containing $v$ or $w$.  The total is
$\ell-2a+5$, which is at most $\ell+3$ since $a\ge1$.  If $a=\ell-a$, then
there is also one full path from $v$ to $w$, but now exceeding $\ell+3$
requires $a=1$ and $\ell=2$, which occurs precisely for the exceptional case
$S_{1,1,1,1}$.

If $d=3$, then $T=S_{a,b,c}$.  To have each leaf in two full paths, the lengths
of any two legs sum to at least $n-\ell-1$.  The union of two legs together
having $t$ vertices contains $t-(n-\ell-1)$ full paths.  Hence the number of
full paths is $2(a+b+c)+3-3(n-\ell-1)$, which simplifies to $3\ell-n+4$.  Since
$n\ge2\ell+1$, the value is at most $\ell+3$.

Now we may assume $\bU\ne\nul$.  Let $v$ be a leaf of $T$ in $\bU$, and let $L$
be the leg of $T[U]$ closest to $v$ (since neighbors of $z$ are in $U$, $L$ is
well-defined).  Let $P$ be the path from $v$ to $z$.  The leg $L$ must be at
least as long as $P$, since otherwise we can enlarge $U$ to obtain an earlier
counterexample by replacing $L$ with $P$ in the spider without changing the
tree or its number of full paths.

Since vertices of $\bU$ are within distance $(n-\ell-2)/2$ of $z$ and full
paths have length $n-\ell-1$, the other end of any full path starting from
$v$ lies in $U$.  The bound on the length of $P$ implies that only one
of these paths can end on $L$, so they end on distinct legs in $T[U]$.  Let
$T^*$ be the tree obtained from $T-v$ by adding one leaf $v^*$ to extend $L$.
We obtain a new $\ell$-spiderly instance $(T^*,U^*)$, where $U^*=U\cup\{v^*\}$.
Since $T$ is a minimal counterexample, $T^*$ has at most $\ell+3$ full paths.
Suppose that $L$ has length at most $n-\ell-3$.  For any full path starting at
$v$ and ending on another leg $L'$, we instead have a full path starting at
$v^*$ that ends on $L'$ (since $L$ is at least as long as $P$).  Also $P\cup L$
may contain a full path starting at $v$, but it must end on $L$ after turning
away from $z$, and shifting the path to start closer to $z$ on $P$ and end
closer to $v^*$ replaces this path with one in $T^*$ that we have not yet
counted.  Thus $T$, like $T^*$, has at most $\ell+3$ full paths.

We conclude that for any leaf $v\in\bU$, the leg $L$ in $T[U]$ closest to
$v$ has at least $n-\ell-2$ edges.  Any full path starting at $v$ must use an
edge of $L$.  Therefore, if some full path $P'$ in $T$ shares no edges with
$L$, then counting the edges in $T$ at $v$, in $L$, and in $P'$ yields
$$
n-1\ge 1+(n-\ell-2)+(n-\ell-1)=2n-2\ell-2\ge n-1.
$$
where the last inequality uses $n\ge2\ell+1$.  Equality must hold throughout,
so $n=2\ell+1$, $L$ has $n-\ell-2$ edges, $v$ has distance $1$ from $L$, and
the only edge outside $P'\cup L$ is the edge $e$ at $v$.  Now $L$ is not long
enough to complete a full path starting at $v$.  Since the only edge outside
$P'\cup L$ is $e$, the path $P'$ contains some vertex $y$ of $L$.  If
$y\ne z$, then $V(P')-\{y\}\esub \bU$, but one end of $P'$ now has distance
at least $(n-\ell+1)/2$ from $z$.  Hence $P'$ contains $z$ and lies
in $T[U]$, but now one end of $P'$ is within distance $(n-\ell-1)/2$ of $z$,
too close to $v$ to finish a full path on that leg.  Thus at most one full path
starts at $v$, contradicting our earlier restriction.  Thus
every full path must share an edge with $L$.

Only one full path starting from $v$ can end in $L$.  To have two full paths
from $v$, the spider $T[U]$ needs another leg $L'$ where a full path from $v$
ends after passing through $z$.  Since $v$ has distance at most $(n-\ell-2)/2$
from $z$, the leg $L'$ has length at least $(n-\ell)/2$.  Since $T$ has no full
path edge-disjoint from $L$, any other leg $L^*$ in $T[U]$ has length at most
$(n-\ell-3)/2$.  Let $w$ be the leaf at the end of $L^*$.  Any full path from
$w$ shares an edge with $L$ and hence must travel to $z$ and then along $L$ to
reach length $n-\ell-1$.  Hence there can be only one such path, which
contradicts the need for $w$ to start two full paths.

We conclude that $T$ has no such leg $L^*$ in addition to $L'$.  This means 
that a second full path from $v$, besides the one ending on $L'$, must end
on $L$.  Thus $P\cup L$ has at least $n-\ell$ edges, since the edge of $L$
incident to $z$ is not in this full path.

Since all neighbors of $z$ lie in $U$, we are now restricted to degree $2$ at
$z$.  If $L'$ contains a branch vertex, leading to a leaf $v'$ outside $U$,
then the argument we gave for $v$ and $L$ also yields at least $n-\ell$ edges
in the union of $L'$ and the path from $v'$ to $z$.  Now $T$ has at least
$2n-2\ell$ edges, but with $n\ge2\ell+1$ this exceeds $n$.  Hence $L'$ contains
no branch vertex.  We can now shift $z$ to the first branch vertex along $L$ to
obtain an earlier instance $(T,U')$ with $U'$ augmented by a neighbor of that
branch vertex.

We have shown that there is no minimal counterexample.
\end{proof}

We have seen that when a card with smallest diameter in an ambiguous deck has
radius $j$, the reconstructions $F$ and $H$ may have the same number of
$j$-centers when $n=2\ell+1$, but that forces $F$ to be a tree.  In this
setting, we will forbid ambiguous decks by showing that $F$ and $H$ cannot have
the same number of full paths.  The marking process again forces $F$ to be a
tree, but to apply marking with $j=k$, again we must exclude the possibility
$k=0$.

\begin{lemma}\label{knot0}
If $\cD$ is ambiguous and $n=2\ell+1$ with $\ell\ge3$, then $k\ge1$.
\end{lemma}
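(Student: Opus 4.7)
The plan is to suppose $k=0$ and derive a contradiction from the rigid structure supplied by Lemma~\ref{k=0}: $n = 2\ell+1$; a card $C$ of smallest diameter is a $1$-evine (a double star on $\ell+1$ vertices, with centers $u,v$); and in $H$, $C$ meets a shortest cycle $Q$ of length $\ell+2$ in exactly four consecutive vertices $a-u-v-b$. Let $A$ be the leaves of $C$ adjacent to $u$ other than $a$, let $B$ be those adjacent to $v$ other than $b$ (so $|A|+|B|=\ell-3$), let $x_1,\dots,x_{\ell-2}$ be the remaining vertices of $Q$, and let $p,q$ be the two vertices of $H$ outside $V(C\cup Q)$.

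First I apply Lemma~\ref{marking} with $j=1$ and $C$ centered at $z=u$. The only neighbor of $u$ on a length-$2$ path in $C$ is $v$, so $d_C=1$, giving at most $\ell+2$ one-centers in $F$. Because $H$ has girth at least $\ell+2\ge 5$, no two neighbors of a vertex in $H$ are adjacent, so the one-centers of $H$ are exactly the vertices of degree $\ge 2$; the $\ell+2$ vertices of $Q$ already supply that many. Corollary~\ref{degrees} matches the degree lists of $F$ and $H$, so both counts equal $\ell+2$, and the equality clause of Lemma~\ref{marking} forces $F$ to be a tree. It follows that every vertex of $H$ outside $Q$ has degree at most $1$: the leaves in $A\cup B$ have degree exactly $1$, and each of $p,q$ has degree $\le 1$. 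Since $|E(C\cup Q)|=2\ell-1$ while $|E(F)|=2\ell$, the graph $H$ has exactly one further edge, and any placement other than $pq$ leaves an isolated vertex in $H$ (hence in $F$), contradicting that $F$ is a connected tree. Thus $H$ is the disjoint union of $C\cup Q$ and the edge $pq$.

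Now I count full paths and conclude. In $H$, deleting one vertex of $Q$ together with $p$ and $q$ yields $\ell+2$ cards isomorphic to $P_{\ell+1}$; each leaf in $A\cup B$ anchors two additional full paths (extending from $u$ or $v$ in each direction around $Q$), and nothing else contributes, giving $(\ell+2)+2(\ell-3)=3\ell-4$ full paths in $H$. For $F$, tracing the marking bijection---noting that the degree-list analysis above makes $a,b$ the only $1$-centers in $V(C)\setminus\{u,v\}$---forces each component of $F\setminus V(C)$ to be a path attached to $a$ or to $b$ by a single edge. Thus $F$ is $\ell$-spiderly: when at most one of $|A|,|B|$ is positive (covering $\ell\le 4$) the tree $F$ is a spider, and otherwise the diameter path of $F$ serves as the spider, with the leaves in $A\cup B$ at distance at most $2\le(\ell-1)/2=(n-\ell-2)/2$ from an interior vertex. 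For $\ell\ge 4$, Lemma~\ref{spiderly} then bounds $F$'s full paths by $\ell+3$, contradicting $3\ell-4>\ell+3$. For $\ell=3$, the forced degree list $\{2^5,1^2\}$ makes $F=P_7$, which contains only $4$ induced copies of $P_4$, one fewer than $H$. The main obstacle is the structural analysis of $F$ via the marking-equality condition, which is what turns $F$ into a tree of controlled shape and enables the decisive $\ell$-spiderly comparison.
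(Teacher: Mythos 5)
Your derivation of the structure of $H$ (via marking with $j=1$, the degree-list match from Corollary~\ref{degrees}, and the edge count forcing the extra edge to be $pq$) is a valid and slightly different route to the same conclusion the paper reaches via Lemma~\ref{C+Q}, and your count of $3\ell-4$ full paths in $H$ and your handling of $\ell=3$ agree with the paper. However, there is a genuine gap in the second half: the claim that the degree-list analysis ``makes $a,b$ the only $1$-centers in $V(C)\setminus\{u,v\}$,'' and hence that the components of $F\setminus V(C)$ attach only at $a$ or $b$, is not justified and is false in general. The degree list is a multiset; it fixes only \emph{how many} vertices of $F$ have degree $2$, not \emph{which} leaves of $C$ acquire a second neighbor. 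The marking process is equally consistent with growing a path from a leaf in $A\cup B$: such a leaf becomes a $1$-center at distance $1$ from $z=u$ but not in $Y=\{v\}$, and it duly marks its new neighbor. Consequently $F$ can fail to be $\ell$-spiderly. For example, with $\ell=5$ (so $n-\ell=6$, $|A|=|B|=1$), the tree $F$ consisting of adjacent branch vertices $u,v$ with legs of lengths $3,2$ at $u$ and $2,2$ at $v$ contains the double star $C$ as an induced card, has the correct degree list, and is compatible with the marking equality; yet no spider rooted at $u$ or $v$ leaves all remaining vertices within distance $(\ell-1)/2=2$ of its root, so Lemma~\ref{spiderly} does not apply.

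This is precisely why the paper's proof does not stop at $\ell$-spiderliness: after showing $F$ grows from $C$ only by extending paths at leaves, it splits into the case where at most one long leg leaves one of the branch vertices (where $F$ \emph{is} $\ell$-spiderly) and the case of four long legs at the two branch vertices, which it dispatches by a direct edge-counting and parity argument showing fewer than $\ell+3$ full paths (with extra care at $\ell=4$). Your proof is missing this second case entirely; in the $\ell=5$ example above $F$ has only $7\le\ell+3$ full paths, but that must be verified by such a counting argument, not by Lemma~\ref{spiderly}. To repair the proof you would need to either restore the paper's four-long-legs analysis or give a separate bound on full paths for non-spiderly trees of this shape.
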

\begin{proof}
Suppose $k=0$.  By Lemma~\ref{k=0}, a card $C$ with smallest diameter is
a $1$-evine (double-star) that in a nonacyclic reconstruction $H$ intersects
a shortest cycle $Q$ exactly in four consecutive vertices, and $Q$ has length
$\ell+2$.

By Lemma~\ref{C+Q}, the component $H'$ of $H$ containing $C\cup Q$ has at most
$n-2$ vertices.  With $C$ having $n-\ell$ vertices and $Q$ having $\ell+2$
vertices, $H'$ has at least $n-2$ vertices, so equality holds.  Thus
$H'$ consists of an $(\ell+2)$-cycle plus $\ell-3$ pendant edges at two
consecutive vertices.  The cards of $H$ that are paths (with $\ell+1$ vertices)
lie along $Q$ or start at a leaf of $C$ not in $Q$.  There are $\ell+2$ such
paths along $Q$ and $2(\ell-3)$ that start at leaves of $C$ not in $Q$, for a
total of $3\ell-4$ cards that are paths.

In $H'$ we have $n-2$ edges.  An $n$-vertex forest with $n-2$ edges cannot have
two isolated vertices.  Since $H$ and $F$ have the same degree list (by
Corollary~\ref{degrees}), $H$ has at most one isolated vertex.  Hence the two
vertices of $H$ outside $H'$ must be adjacent, giving both graphs $n-1$ edges.
Hence $F$ is a tree.  

The vertices of $H$ not in $C$ all have degree $2$, except for the two leaves
outside $H'$.  Since $F$ and $H$ have the same degree list, the tree $F$ grows
from $C$ only by appending edges at leaves to extend paths.  That is, the
central vertices of $C$ cannot receive more incident edges, and no additional
branch vertices can be created.

The $1$-evine $C$ may have only one branch vertex and be obtained from a star by
appending one edge.  In this case $C$ and $F$ are both spiders and $F$ has at
most $\ell+3$ full paths.  Otherwise, $C$ and $F$ have exactly two branch
vertices, and they are adjacent.  If at either of the two branch vertices of
$F$ there is at most one leg with length at least $(n-\ell-3)/2$, then $F$ is
$\ell$-spiderly, since outside the largest spider of degree $3$ with legs
emanating from the other branch vertex $z$ are paths that reach distance
at most $(n-\ell-2)/2$ from $z$.

Otherwise, from the two branch vertices of $F$ there are four legs that
each have at least $\CL{(n-\ell-3)/2}$ edges.  Note that $n-\ell$ has
opposite parity from $\ell$, so these legs have at least $(n-\ell-2)/2$
edges when $\ell$ is odd, $(n-\ell-3)/2$ when $\ell$ is even.

When $\ell$ is odd, these four legs plus the central edge occupy
$2n-2\ell-3$ edges.  Since $2n-2\ell-3=n-2$, there remains only one edge to
add.  The subtree before adding that edge has four full paths, each consisting
of a leg from each branch vertex plus the central edge.  Adding one more
edge creates at most three more full paths, achieved by extending one leg.
Hence $F$ has fewer than $\ell+3$ full paths when $\ell\ge5$.

When $\ell$ is even, the four legs of length $(n-\ell-3)/2$ plus central edge
occupy $n-4$ edges but do not create any full path.  With only three edges
to add, only $2+3+3$ full paths can be created, so $F$ has fewer than
$\ell+3$ full paths when $\ell\ge6$.  When $\ell=4$, in fact only seven full
paths can be created, because when one of the legs is extended by two edges,
the leaf will be too far away to start a full path that reaches past the
opposite branch vertex.

We have shown that $F$ has at most $\ell+3$ full paths when $\ell\ge4$,
but we found $3\ell-4$ full paths in $H$.  Since $3\ell-4>\ell+3$ when
$\ell\ge4$, there is no ambiguous $\cD$ with $k=0$ unless $\ell=3$.
In that case $\ell-3=0$ and $n-\ell=4$, and $C$ is just a $4$-vertex path.
We find that $F$ is $P_7$ and contains four copies of $P_4$, while $H$ is the
disjoint union of a $5$-cycle and an edge, containing five copies of $P_4$.
\end{proof}

\begin{lemma}\label{fullpaths}
Given $n=2\ell+1$, let $\cD$ be an ambiguous deck with acyclic reconstruction
$F$ and nonacyclic reconstruction $H$.  The graph $F$ is a
tree that has at most $\ell+3$ full paths.
\end{lemma}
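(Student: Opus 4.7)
The plan is to combine the equality case of Lemma~\ref{marking} with Lemma~\ref{spiderly}. First, Lemma~\ref{knot0} gives $k\ge 1$ (the case $\ell=2$ forces $n=5$, which is excluded from the main theorem and may be handled by inspection). Theorem~\ref{2l+1} then applies: $F$ is a tree, $H$ has girth $\ell+2$, and $F$ has exactly $1+d_C+\ell$ $j$-centers, where $C$ is a card of minimum diameter in $\cD$ and has radius $j+1$ (so $j\in\{k,k+1\}$).

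Equality in the bound of Lemma~\ref{marking} now forces every vertex of $F$ outside $C$ to be marked. Each marked vertex $v$ uniquely determines its marker (the $j$-center lying at distance $j$ from $v$ along the path from $z$ to $v$, where $z$ is a central vertex of $C$), and since distinct marked vertices have distinct markers, the $\ell$ vertices outside $C$ are in bijection with $\ell$ distinct $j$-centers of $F$. In particular, for every $w\in V(F)$ with $d(z,w)\ge 2$, the subtree $T_w$ of $F$ rooted at $w$ (taken away from $z$) has at most one vertex at distance exactly $j$ from $w$; otherwise two distinct outside-$C$ vertices would share the same marker.

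Recursively applying this thinness, I would deduce that beyond distance $1$ from $z$ the tree $F$ consists of a single outgoing ``spine'' descending from each child of $z$, with side branches of depth at most $j-2$ hanging off each spine vertex. Taking the spider $U$ to be the union of $z$ together with these spines, every vertex of $V(F)\setminus U$ lies in a shallow side branch and hence, using $j\le (\ell-1)/2$, within distance $(\ell-1)/2 = (n-\ell-2)/2$ of the chosen branch vertex. Thus $F$ is $\ell$-spiderly. Since $\ell\ge 3$ rules out the exceptional spider $S_{1,1,1,1}$, Lemma~\ref{spiderly} yields that $F$ has at most $\ell+3$ full paths.

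The main obstacle is the structural step: verifying rigorously that each side branch attached at a deep spine vertex actually fits within the $(\ell-1)/2$-ball centered at the branch vertex of $U$. If the spine continuation past a vertex $w$ is long while a side branch at $w$ is short, one may need to shift the spider's branch vertex from $z$ to $w$ and absorb part of the $z$-to-$w$ path into a leg. A secondary subtlety arises in the case where $C$ has diameter $2k+3$ (central edge rather than unique center): the role of $z$ must be played by one of the two central vertices of $C$, and a parallel analysis is needed with the path through the other center absorbed into the spider.
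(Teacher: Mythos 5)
Your overall strategy is the same as the paper's: invoke Lemma~\ref{knot0} and Theorem~\ref{2l+1} to conclude that $F$ is a tree with exactly $1+d_C+\ell$ $j$-centers, so that the marking process from a center $z$ of a minimum-diameter card $C$ marks every one of the $\ell$ vertices of $F$ outside $C$; then show $F$ is $\ell$-spiderly and apply Lemma~\ref{spiderly}. But the structural step, which is the whole content of the proof, is left as an acknowledged ``obstacle,'' and the sketch you give of it does not work as stated. Your thinness claim (for $d(z,w)\ge2$, at most one vertex of $T_w$ at distance exactly $j$ from $w$) is correct, but it constrains nothing at distance $1$ from $z$ and nothing inside $C$, and the structure you then assert --- a single spine descending from each child of $z$ with side branches of depth at most $j-2$ --- is false in general, because $C$ is an arbitrary tree of radius $j+1$ and only vertices \emph{outside} $C$ need markers. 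The paper instead proves the sharper statement that no two vertices $v,v'$ outside $C$ at equal distance from $z$ can have $z$-paths sharing an edge: for a closest such pair, if the two paths merge within $j$ steps of $v$ and $v'$ they would require the same marker, while if they merge later then $v$ and $v'$ lie far enough from $z$ that their predecessors toward $z$ are also outside $C$ and form a closer such pair. This yields directly that the outside-$C$ vertices together with their paths to $z$ induce a spider centered at $z$, i.e., $F$ grows from $C$ only along edge-disjoint paths from $z$.

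There is also a quantitative gap in your final step. You bound the distance of off-spider vertices by $(\ell-1)/2=(n-\ell-2)/2$ ``using $j\le(\ell-1)/2$,'' but the off-spider vertices lie in $C$ and can be at distance $j+1$ from $z$, and $j+1$ can equal $(\ell+1)/2$, one more than the threshold in the definition of $\ell$-spiderly. The paper closes this with a case split you omit: if $C$ has at most one branch vertex, then $F$ is itself a spider (hence $\ell$-spiderly with no distance condition to check), while if $C$ has at least two branch vertices, then a longest path of $C$ misses at least two of its $n-\ell$ vertices, which forces $j+1\le(n-\ell-2)/2$ and makes $F$ $\ell$-spiderly with branch vertex $z$. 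Without these two repairs the appeal to Lemma~\ref{spiderly} does not go through.
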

\begin{proof}
Let $C$ be a card with minimum diameter; the diameter of $C$ is $2k+2$ or
$2k+3$, by Lemma~\ref{diam}, with radius $k+1$ or $k+2$, respectively.  Let $C$
have radius $j+1$, so $j\in\{k,k+1\}$.  By Theorem~\ref{2l+1}, $F$ has
$1+d_C+\ell$ $j$-centers and is a tree.  That is, the marking process from a
center $z$ of $C$ marks all $\ell$ vertices of $F$ outside $C$.

We claim that $F$ cannot have vertices $v$ and $v'$ outside $C$ at equal
distance from $z$ whose paths to $z$ share an edge.  Consider such a pair
closest to $z$.  Their paths to $z$ cannot meet after traveling at most $j$
steps toward $z$, because then there would be only one $j$-center that can mark
them both, and it marks only one vertex.  If the paths meet after traveling
more than $j$ steps toward $z$, then since $z$ and its neighbors mark no
vertices, $v$ and $v'$ have distance at least $j+3$ from $z$.  Now the vertices
next to $v$ and $v'$ on the paths to $z$ are outside $C$ and form such a pair
closer to $z$.

We conclude that the subtree of $F$ induced by all the vertices outside $C$
and the vertices on their paths to $z$ is a spider.  That is, $F$ grows from
$C$ only by extending edge-disjoint paths from $z$.  If $C$ has at most one
branch vertex, then $F$ is a spider, and by Lemma~\ref{spiderly} $F$ has at
most $\ell+3$ full paths.

If $C$ has at least two branch vertices, then any longest path in $C$ has at
most $n-\ell-2$ vertices.  The longest path in $C$ has $2k+3$ or $2k+4$
vertices, depending on the diameter.  In either case, with radius $j+1$ for
$C$, we obtain $j+1\le (n-\ell-2)/2$.  All vertices of $F$ not in the spider we
have constructed are in $C$ and hence are within distance $(n-\ell-2)/2$ of
$z$.  This makes $F$ an $\ell$-spiderly tree, and by Lemma~\ref{spiderly} it
has at most $\ell+3$ full paths.
\end{proof}

\begin{theorem}\label{main}
For $n\ge 2\ell+1\ge3$, the family of $n$-vertex acyclic graphs is
$\ell$-recognizable, except when $(n,\ell)=(5,2)$.
\end{theorem}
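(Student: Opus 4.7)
The plan is to reduce to the case $n = 2\ell+1$ with $\ell \ge 3$ and argue there by contradiction. For $n \geq 2\ell+2$, the result is Corollary~\ref{2l+2}. When $n = 2\ell+1$, the cases split: $(n,\ell) = (3,1)$ is handled by observing that the $2$-deck determines the edge count and $K_3$ is the unique nonacyclic $3$-vertex graph; $(n,\ell) = (5,2)$ is excluded by hypothesis; and for $\ell \geq 3$ with $n = 2\ell+1$ I assume for contradiction that an ambiguous deck $\cD$ exists with acyclic reconstruction $F$ and nonacyclic reconstruction $H$. By Lemma~\ref{fullpaths}, $F$ is a tree with at most $\ell+3$ full paths, and by Theorem~\ref{2l+1}, the graph $H$ has girth $\ell+2$. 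Let $Q$ be a shortest cycle of $H$ (of length $\ell+2$) and let $H'$ be the component of $H$ containing $Q$. The argument then splits on whether $V(H')$ properly contains $V(Q)$.

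If $V(H') \supsetneq V(Q)$, then connectedness of $H'$ produces a vertex $w \in V(H') \setminus V(Q)$ adjacent to some $u \in V(Q)$. For $\ell \geq 3$ the girth of $H$ forces $u$ to be the only neighbor of $w$ on $Q$: a second neighbor would create a cycle of length at most $2+\lfloor(\ell+2)/2\rfloor < \ell+2$. Hence for each of the two directions along $Q$ from $u$, the path $w - u - \cdots$ of length $\ell$ is an induced $P_{\ell+1}$ (no chord within $Q$ by girth, and $w$ has only one edge into $Q$), yielding a full path. These two full paths, together with the $\ell+2$ full paths obtained by omitting a single vertex of $Q$, produce at least $\ell+4$ full paths in $H$, contradicting the bound of $\ell+3$ on $F$'s full paths.

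Otherwise $V(H') = V(Q)$, so the remaining $\ell-1$ vertices of $H$ hold the remaining $2\ell - (\ell+2) = \ell-2$ edges. Since any cycle would need at least $\ell+2 > \ell-1$ vertices, this remainder is a forest, and its vertex-edge count forces a single tree $T$. Every connected card of $H$ has $\ell+1 > |V(T)|$ vertices and therefore lies entirely within $Q$; hence $H$ has exactly $\ell+2$ connected cards, all isomorphic to $P_{\ell+1}$. Matching $F$'s deck, every $(\ell+1)$-vertex subtree of $F$ must be a path. If $F$ had a vertex $v$ of degree at least $3$ with neighbors $u_1, u_2, u_3$, I would extend $\{v, u_1, u_2, u_3\}$ greedily within $F$ to a subtree of $\ell+1$ vertices (feasible since $F$ is connected with $2\ell+1 > \ell+1$ vertices), and that subtree would contain $K_{1,3}$ as an induced subgraph and hence not be a path---a contradiction. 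Thus $F$ has maximum degree $2$, so $F = P_{2\ell+1}$; but $P_{2\ell+1}$ has only $\ell+1$ subpaths on $\ell+1$ vertices rather than the required $\ell+2$, giving the final contradiction. The main obstacle is precisely this second case, because the full-path count alone already coincides between $F$ and $H$; what rescues the argument is switching to counting connected cards (all isomorphic to $P_{\ell+1}$ in $H$) and then observing that any branching in $F$ creates an induced $K_{1,3}$ within some $(\ell+1)$-vertex subtree.
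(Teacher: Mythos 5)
Your proof is correct, and it rests on the same pillars as the paper's: reduce to $n=2\ell+1$ via Corollary~\ref{2l+2}, then invoke Theorem~\ref{2l+1} and Lemma~\ref{fullpaths} to get that $F$ is a tree with at most $\ell+3$ full paths while $H$ has a girth-$(\ell+2)$ cycle $Q$ contributing $\ell+2$ full paths, and finish by a counting contradiction. Where you diverge is in the case decomposition and the endgame. The paper splits on $t$, the number of vertices of a minimum-diameter card $C$ lying outside $Q$ (using Lemma~\ref{C+Q}): for $t\ge1$ each such vertex yields two extra full paths, and for $t=0$ the minimum-diameter card is a path, which forces $F=P_{2\ell+1}$ with only $\ell+1$ full paths against the $\ell+2$ in $Q$. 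You instead split on whether the component of $Q$ equals $Q$; your first case is the same two-extra-full-paths argument (your verification that a vertex adjacent to $Q$ has a unique neighbor on $Q$ and launches two induced $P_{\ell+1}$'s is sound for $\ell\ge3$), but in the degenerate case you pin down $H$ as $C_{\ell+2}$ plus a tree on $\ell-1$ vertices via the edge count, switch to counting \emph{connected} cards (all $\ell+2$ of them being $P_{\ell+1}$), and rule out branch vertices in $F$ by embedding an induced claw in a connected card. That alternative closing is self-contained and avoids the paper's reliance on the ``minimum-diameter card is a path implies $F$ is a path'' step, at the cost of needing the shared edge count explicitly. One small citation point: your appeal to Theorem~\ref{2l+1} (and to Lemma~\ref{fullpaths}, whose proof routes through it) tacitly requires $k\ge1$, which for $n=2\ell+1$ and $\ell\ge3$ is supplied by Lemma~\ref{knot0}; you should acknowledge that dependency, though it does not affect correctness in your regime.
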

\begin{proof}
Suppose that there is an ambiguous deck $\cD$ with reconstructions $F$ and $H$
as we have been discussing.  By Theorem~\ref{2l+1}, $n=2\ell+1$ and $F$ is a
tree and $H$ has girth $\ell+2$.  By Lemma~\ref{fullpaths}, $F$ has at most
$\ell+3$ full paths.  Hence at most $\ell+3$ cards in the deck are paths;
this is the key point that will yield a contradiction.

With $(n,\ell)=(5,2)$ excluded and the result already known for $\ell=1$
by the original work of Kelly~\cite{Kel2}, we may assume $n-\ell=\ell+1\ge4$.
Full paths have length $\ell$.

By Theorem~\ref{2l+1}, a shortest cycle $Q$ in $H$ has length $\ell+2$.
There are $\ell+2$ full paths in $Q$.  Let $C$ be a card with smallest diameter.
By Lemma~\ref{C+Q}, $C$ and $Q$ lie in the same component of $H$.
Since this component has at most $n-2$ vertices, $C$ has at most $\ell-3$
vertices outside $Q$, and hence each vertex of $C$ is within distance $\ell-3$
of $V(Q)$.  Therefore, from each vertex of $C$ outside $V(Q)$, one can travel
to $V(Q)$ in $C$ and then complete a full path in either direction along $Q$.
Hence $H$ contains at least $\ell+2+2t$ full paths, where $t$ is the number of
vertices of $C$ outside $Q$.

If $t\ge1$, then this contradicts the previous conclusion that the deck has at
most $\ell+3$ cards that are paths.  Since $Q$ has no chords, we can only have
$t=0$ if $C$ is a path contained in $Q$.  If the smallest diameter card in
$\cD$ is a path, then $F$ is a path.  In that case $F$ contains only $\ell+1$
full paths, which again is fewer than the $\ell+2$ full paths in $H$.
\end{proof}

\bigskip
\centerline{\bf\large Acknowledgment}
We thank the referees for their careful reading and many helpful comments.

\end{document}